\newtheorem{thrm}{Theorem}[section]
\newtheorem{lem}[thrm]{Lemma}
\newtheorem{prop}[thrm]{Proposition}
\newtheorem{cor}[thrm]{Corollary}
\theoremstyle{definition}
\newtheorem{definition}[thrm]{Definition}
\newtheorem{remark}[thrm]{Remark}
\numberwithin{equation}{section}
\author{Qingnan An}
\address{Postdoctoral Research Station of Mathematics, Hebei Normal University, Shijiazhuang, Hebei, China\ 050016}
\email{369824037@qq.com}
\author{George A. Elliott}
\address{Department of Mathematics, University of Toronto, Toronto, Ontario, Canada \ M5S 2E4}
\email{elliott@math.toronto.edu}
\author{Zhiqiang Li}
\address{College of Mathematics and Statistics, Chongqing University, Chongqing, China \ 401331}
\email{zqli@cqu.edu.cn}
\author{Zhichao Liu}
\address{Postdoctoral Research Station of Mathematics, Hebei Normal University, Shijiazhuang, Hebei, China\ 050016}
\email{lzc.12@outlook.com}
\keywords{Ordered total K-theory; Generalized dimension drop interval algebras; Real rank zero}
\subjclass[2000]{Primary 46L35, Secondary 46K80 19K35}
\begin{document}

\title[On the classification of C*-algebras of real rank zero, \emph{V}]{On the classification of C*-algebras of real rank zero, \emph{V}}

\begin{abstract}
In this paper, we continue to work under the scheme of the Elliott program in the setting of real rank zero C*-algebras. In particular, using ordered total K-theory, we give a classification for the real
rank zero inductive limits of direct sums of generalized dimension drop interval algebras.

\end{abstract}

\maketitle
\section{introduction}
A large part of the history of the classification of amenable C*-algebras, beginning with the UHF algebras of Glimm (\cite{Gli}, see Bratteli \cite{Bra}, also Dixmier \cite{Dix}), and the AF algebras of Bratteli (\cite{Bra}, see also Elliott \cite{Ell1}),  has been the consideration of the question in the case of real rank zero. (Being of real rank zero means density of the invertible elements in the set of all self-adjoint elements.) This was the setting for both the classification of the finite AT algebras considered in \cite{Ell}, and that of the infinite algebras (limits of direct sums of Cuntz algebras) considered by Rordam in \cite{Ror}. (In the first case real rank zero is a restriction, while in the second case it was automatic.) It was also the setting for the much more general finite, respectively, infinite algebras considered in \cite{EG} and \cite{DG}, on the one hand, and in \cite{BEEK} and \cite{KP} (also in \cite
{Phi}) on the other hand.

Interestingly, while classification results for algebras not of real rank zero definitely require additional restrictions, e.g., in the simple case to Jiang-Su stable algebras, in the real rank zero case, such restrictions are redundant.

In a number of articles, \cite{Ell}, \cite{Ei}, and \cite{DG}, particular attention has been given to the case of limits of dimension drop interval algebras. (The case of ordinary interval---no dimension drops---is easy since a real rank zero limit of such algebras must in fact be an AF algebra.) These papers consider the case of what might be known as classical dimension drop interval algebras---with the same dimension drop at both ends of the interval. More recent papers---\cite{Th}, \cite{My}, \cite{JS}, and \cite{Li}---go further, establishing classification in the simple case for inductive limits of (finite direct sums of) general dimension drop interval algebras (possibly different dimension drops at the two endpoints). Since, as suggested in \cite{GL} and \cite{AE}, some subtleties arise in the search for an existence theorem that might be useful in an intertwining argument for general non-simple limits (of the more general non-classical dimension drop algebras), it seems appropriate to show that, at least in the real rank zero setting (and so, perhaps, also in the case with the ideal property), a suitable existence theorem can in fact be formulated (in the general dimension drop case), which together with an appropriate uniqueness theorem, makes an intertwining argument to prove isomorphism possible.



The real rank zero inductive limits we will focus on involve sequences as follows: $$\bigoplus\limits_{i}^{}\textrm{M}_{l_i}(\mathrm{I}[m_{0,i},m_i,m_{1,i}])
\stackrel{\phi_1}\rightarrow\bigoplus\limits_{j}^{}\textrm{M}_{k_j}
(\mathrm{I}[n_{0,j},n_j,n_{1,j}])\stackrel{\phi_2}\rightarrow...\rightarrow \textrm{A},$$ where $\mathrm{I}[m_0, m, m_1]$ is a generalized dimension drop interval algebra (see Definition \ref{JSD}), and all direct sums are finite. The main result of this paper is the following classification theorem.
\begin{thrm}\label{cla}
 Let $A=\underrightarrow{\lim}(A_n,\phi_{n,m})$ and $B=\underrightarrow{\lim}(B_n,\psi_{n,m})$ be
 two real rank zero inductive limit C*-algebras of direct sums of generalized dimension drop
 interval algebras. Suppose that there is an isomorphism of ordered scaled groups
 $$
 \alpha: (\underline{\mathrm{K}}(A), \underline{\mathrm{K}}^+(A), \Sigma(A))\rightarrow(\underline{\mathrm{K}}(B), \underline{\mathrm{K}}^+(B), \Sigma(B))
 $$ which preserves the action of the Bockstein operations, then there is an isomorphism $\phi: A\rightarrow B$ with $\phi_*=\alpha$. (For ordered total K-theory, see Definitions 2.2--2.5.)

\end{thrm}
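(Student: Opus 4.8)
The plan is to establish the isomorphism by an Elliott two-sided approximate intertwining, so that the real content splits into an \emph{existence} theorem and a \emph{uniqueness} theorem for $*$-homomorphisms between the building blocks $\mathrm{M}_{l}(\mathrm{I}[m_0,m,m_1])$ of Definition~\ref{JSD}. The starting point is the continuity of total K-theory: one has $\underline{\mathrm{K}}(A)=\varinjlim\underline{\mathrm{K}}(A_n)$ and $\underline{\mathrm{K}}(B)=\varinjlim\underline{\mathrm{K}}(B_n)$ compatibly with the order, the scale, and the Bockstein operations, and each $\underline{\mathrm{K}}(A_n)$ is finitely generated. Hence the given isomorphism $\alpha$, and likewise $\alpha^{-1}$, can be approximated on the image of each finite stage: after passing to a subsequence, $\alpha$ carries the image of $\underline{\mathrm{K}}(A_n)$ into the image of $\underline{\mathrm{K}}(B_{m})$ for $m$ large, yielding partial morphisms $\underline{\mathrm{K}}(A_n)\to\underline{\mathrm{K}}(B_{m})$ and $\underline{\mathrm{K}}(B_{m})\to\underline{\mathrm{K}}(A_{n'})$ that preserve $\underline{\mathrm{K}}^+$, $\Sigma$, and every Bockstein operation, and that form a diagram commuting up to the identifications induced by the connecting maps.

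The uniqueness theorem I would use states that for each finite subset $F$ and tolerance $\varepsilon$ there are $G$ and $\delta$ such that any two $*$-homomorphisms from a building block into a target stage which agree on the relevant part of ordered total K-theory (and whose induced tracial data agree to within $\delta$ on $G$) are approximately unitarily equivalent to within $\varepsilon$ on $F$. The decisive point in the real rank zero setting---and the reason, as noted in the introduction, that no Jiang--Su type regularity hypothesis is needed---is that the tracial data is pinned down by $(\underline{\mathrm{K}},\underline{\mathrm{K}}^+,\Sigma)$ in the limit, and that discrepancies in spectral multiplicity can be absorbed because the target, being of real rank zero, has an abundance of projections and one may perturb spectra onto finite sets.

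The existence theorem is where I expect the main difficulty to lie, exactly as flagged in the introduction. Given one of the Bockstein-preserving morphisms $\underline{\mathrm{K}}(A_n)\to\underline{\mathrm{K}}(B_{m})$ above, I must realize it, approximately and compatibly with the connecting maps, by an actual $*$-homomorphism $A_n\to B_{m}$. A homomorphism between generalized dimension drop algebras is governed by a spectral-multiplicity function together with the boundary behaviour at the two endpoints, where the dimension drops $m_0,m_1$ and $n_0,n_1$ live; matching not only $\mathrm{K}_0$ and $\mathrm{K}_1$ but the coefficient groups $\mathrm{K}_*(\,\cdot\,;\mathbb{Z}/k)$ and the Bockstein relations among them imposes delicate divisibility and compatibility constraints on the admissible multiplicities and boundary maps. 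Because the drops at the two ends may differ (the non-classical case), realizing a prescribed torsion map by a genuine homomorphism---rather than merely a map of groups---is subtle; the real rank zero freedom (projections to split off, spectra to discretize) is what I would use to produce the required homomorphism up to the prescribed tolerance.

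Finally I would assemble the pieces. Alternating the existence and uniqueness theorems along the two inductive systems produces a sequence of $*$-homomorphisms $A_{n_1}\to B_{m_1}\to A_{n_2}\to B_{m_2}\to\cdots$ whose triangles commute up to unitary equivalence within tolerances chosen to sum to a convergent total. Elliott's approximate intertwining then yields mutually inverse limit homomorphisms $\phi\colon A\to B$ and $\psi\colon B\to A$. Since each triangle was built to implement $\alpha$ (respectively $\alpha^{-1}$) on the corresponding finite stage, passing to the limit gives $\phi_*=\alpha$, as required.
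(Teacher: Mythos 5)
Your overall architecture --- continuity of $\underline{\mathrm{K}}$, finite generation of each $\underline{\mathrm{K}}(A_n)$ as a $\Lambda$-module, factorization of $\alpha$ and $\alpha^{-1}$ through finite stages, and an Elliott approximate intertwining driven by an existence theorem and a uniqueness theorem --- is exactly the paper's (which uses Theorem \ref{UCT} and Proposition \ref{dg finite} for the factorization step). But both pillars are left as descriptions of the difficulty rather than arguments, and one is described with the wrong hypotheses, so there are genuine gaps. On existence: you correctly flag it as the crux, but then only say that the divisibility constraints are ``subtle'' and that ``real rank zero freedom'' would produce the homomorphism; no mechanism is given. The paper's resolution (Theorem \ref{exist}) is exact, not approximate, and does not use real rank zero at all: one computes $\mathrm{KK}(A,B)$ concretely as $C(A,B)/M(A,B)$ (Theorem \ref{CM}), notes that a diagram lifts to a homomorphism precisely when its middle map has no negative entries (Lemma \ref{CRCITE}), and takes the finite test set $F$ to be the single class $\mathrm{KK}(\iota_m)$ of the embedding $\widetilde{I}_m\hookrightarrow I[m_0,m,m_1]$. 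Positivity of $\alpha_*(\mathrm{KK}(\iota_m))$ then forces a representative diagram with nonnegative entries $a+u_1m/m_0$, $b-u_1m/m_1$, etc., and the scale condition brings the lift from $M_r(B)$ down to $B$ (Proposition \ref{scale}). Without identifying which finite part of $\underline{\mathrm{K}}^+$ must be preserved and why its positivity suffices, the intertwining cannot be set up.

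On uniqueness: your statement carries a hypothesis that the induced ``tracial data'' agree to within $\delta$. Traces are not part of the invariant here, so an intertwining built on such a uniqueness theorem would also require an existence theorem controlling traces, which you do not have; this is a mismatch, not a harmless reformulation. The paper's uniqueness theorem (Theorem \ref{unithm0}) needs only $[\phi]=[\psi]$ in $\mathrm{KK}$. Real rank zero enters through the decomposition theorem (Theorem \ref{decom}) together with the vanishing of the weak variation along the system (Corollary \ref{wvsmall}), which let one write $\nu_{n,r}\circ\phi$ as a small corner plus a large ($m$-large in the sense of Definition \ref{deflarge}) finite-dimensional homomorphism; the finite-dimensional parts are compared by the explicit $\mathrm{K}_0$ bookkeeping of Lemmas \ref{unires1} and \ref{unires2}, and the small corner is absorbed by the stable uniqueness coming from property (H) (Theorem \ref{prouni}). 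Your remark that the tracial data is ``pinned down'' by the ordered total K-theory is morally why no trace hypothesis is needed, but it is not a substitute for this absorption argument.
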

This paper is organized as follows. In Section 2, we collect some preliminary concepts concerning ordered total K-theory and generalized dimension drop interval algebras. In Section 3, we develop some technical results (concerning the weak variation and decomposition) for real rank zero inductive limits. In Section 4, we establish existence results, including the main existence theorem we use in the intertwining argument. In Section 5, several (stable) uniqueness theorems are proved, and Theorem \ref{cla} is confirmed.

\section{notation and preliminaries}
In this section, for the convenience of readers, we collect some necessary definitions and set up notation.

\begin{definition}[\cite{Ell}]
The classical dimension drop interval algebra refers to the C*-algebra
$$
I_p=\{f\in \mathrm{M}_p(C_0(0,1]):\,f(1)=\lambda\cdot1_p,\,1_p {\rm \,is\, the\, identity\, of}\, \mathrm{M}_p\},
$$
and the C*-algebra $\widetilde{I}_p$ obtained by adjoining a unit to $I_p$.
\end{definition}

The invariant we concern is the K-theory with coefficients, see \cite{Go}, \cite{DL1} and \cite{DG}.
\begin{definition}[\cite{DL1}, \cite{DG}] For a natural number $p$ (not necessarily prime) and a C*-algebra $A$, the K-theory of $A$ with coefficients in $\mathbb{Z}_p\,(=\mathbb{Z}/p\mathbb{Z})$ can be formulated as follows: $\mathrm{K}_*(A;\mathbb{Z}_p)=\mathrm{KK}(I_{p},A\otimes C(S^1))$ for $p\geq1$, and $\mathrm{K}_*(A;\mathbb{Z}_p)=\mathrm{KK}(\mathbb{C}, A\otimes C(S^1))=\mathrm{K}_*(A)$ for $p=0$. Note that $\mathrm{K}_*(A;\mathbb{Z}_p)=0$ for $p=1$.

Let us set
$$\mathrm{K}_*(A;\mathbb{Z}\oplus\mathbb{Z}_p)=\mathrm{K}_*(A)\oplus \mathrm{K}_*(A;\mathbb{Z}_p),$$
or equivalently,
$$\mathrm{K}_*(A;\mathbb{Z}\oplus\mathbb{Z}_p)\cong \mathrm{KK}(\widetilde{I_{p}},A\otimes C(S^1)),$$
for all $p\geq0$, with $I_0=\mathbb{C}$.
\end{definition}

\begin{definition}[\cite{DL1}, \cite{DG}]
For a  C*-algebra $A$, the total $\mathrm{K}$-theory of $A$ is defined as
$$
\underline{\mathrm{K}}(A)=\bigoplus_{p=0}^\infty \mathrm{K}_* (A;\mathbb{Z}_p).
$$
\end{definition}

\begin{definition}[\cite{DL1}, \cite{DG}] (Dadarlat-Loring order.) The order structure we shall work with is $\mathrm{K}_*(A;\mathbb{Z}\oplus\mathbb{Z}_p)^{+}$, which can be identified as the image of the abelian semigroup
$[\widetilde{I_{p}},A\otimes C(S^1)\otimes \mathcal{K}]$ in $\mathrm{KK}(\widetilde{I_{p}},A\otimes C(S^1))\,(\cong \mathrm{K}_*(A;\mathbb{Z}\oplus\mathbb{Z}_p))$.
\end{definition}



\begin{definition}
  Let $A$ and $B$ be unital C*-algebras. Two homomorphisms $\phi,\psi:A\rightarrow B$ are said to be unitarily equivalent, written as $\phi\sim_u\psi$, if there is a unitary $u\in B$ such that $\phi(a)=u\psi(a)u^*$ for all $a\in A$. If $\varepsilon>0$ and $F\subseteq A$ is a finite set, $\phi$ is said to be approximately unitarily equivalent to $\psi$ on $F$ to within $\varepsilon$, written as $\phi\sim_{F, \,\varepsilon}\psi$, if there is a unitary $u\in B$ such that $\|\phi(a)-u\psi(a)u^*\|<\varepsilon$ for all $a\in F$.
\end{definition}

\begin{definition}[\cite{JS}, \cite{GL}]\label{JSD}
A generalized dimension drop interval algebra, denoted by $I[m_0,m,m_1]$,
is the unital ${\mathrm C}^*$-algebra
$$
I[m_0,m,m_1]=\{f\in \mathrm{M}_m(\mathrm{C}([0,1])):f(0)=a_0\otimes 1_{m/m_0}, f(1)=a_1\otimes 1_{m/m_1}\},
$$
where $m_0,\,m_1$ divide $m$, $a_0$ and $a_1$ (for a given $f$) belong to $\mathrm{M}_{m_0}$ and $\mathrm{M}_{m_1}$, respectively, and
$1_{m/m_0}$ and $1_{m/m_1}$ are the identity elements of $\mathrm{M}_{m/m_0}$ and $\mathrm{M}_{m/m_1}$, respectively.
\end{definition}

We note that the algebras $\widetilde{I}_p$ are generalized dimension drop interval algebras and $\mathrm{M}_{r}(I[m_0,m,m_1])\cong I[rm_0,rm,rm_1]$.

\begin{definition}\label{irrep}
For any $f\in I[m_0,m,m_1]$, $t\in (0,1)$, let us define the point evaluation map $\pi_t: I[m_0,m,m_1]
\rightarrow \mathrm{M}_{m}$ by $\pi_t(f)=f(t)$; for $t=0,1$, write $f(t)=a_t\otimes id_{m/m_t}$, and define $\pi_t: I[m_0,m,m_1])
\rightarrow \mathrm{M}_{m_t}$ by $\pi_t(f)=a_t$. These are all the irreducible representations of $I[m_0,m,m_1]$.
\end{definition}


\begin{definition}\label{deflarge}
 Let $A$ and $B$ be generalized dimension drop interval algebras, and
 $\phi:\mathrm{M}_r(A)\rightarrow \mathrm{M}_k(B)$ a homomorphism. The induced K-theory map $[\phi]$ will be called $l$-large, if there exists a projection $0\neq p\in \mathrm{M}_k(B)$ such that
 $$
 [\phi]([1_A])\geq l\cdot[p] \;{\rm  in}\;\mathrm{K}_0(B),
 $$
 and $[\phi]$ is called strictly $l$-large if $[\phi]$ is $(l+1)$-large.
\end{definition}

\begin{remark} The K-theoretic information in a generalized dimension drop interval algebra is summarized as follows. For $I[m_0,m,m_1]$, one has the short exact sequence
$$
0\rightarrow \mathrm{M}_m(\mathrm{C}_0(0,1)) \xrightarrow{\iota} I[m_0,m,m_1] \xrightarrow{\pi_0\oplus \pi_1} \mathrm{M}_{m/m_0}\oplus \mathrm{M}_{m/m_1}\rightarrow0,
$$
where $\iota$ is the embedding map.

Then we have the six-term exact sequence
$$
0\to \mathrm{K}_0(I[m_0,m,m_1])\xrightarrow{(\pi_0\oplus\pi_1)_*} \mathbb{Z}\oplus \mathbb{Z}\xrightarrow{(\frac{m}{m_0}, -\frac{m}{m_1})} \mathbb{Z}\xrightarrow{\iota_*} \mathrm{K}_1(I[m_0,m,m_1])\to 0.
$$
Hence,
$$
\mathrm{K}_0(I[m_0,m,m_1])=\mathbb{Z},\quad
\mathrm{K}_1(I[m_0,m,m_1])=\mathbb{Z}_p,
$$
where $p=(m/m_0,m/m_1)$.
\end{remark}

\section{weak variation}
In this section, we gather some results needed later for the study of real rank zero limits of generalized dimension drop interval algebras.

\begin{definition}[\cite{DG}]\label{3.1ell}
 Let $F$ be a finite subset of $M_r(I[m_0,m,m_1])$. The weak variation of $F$ is defined by
 $$
 \omega(F)=\sup\limits_{s,t\in[0,1]}\inf\limits_{u\in U(rm)}\max\limits_{f\in F}\|uf(s)u^*-f(t)\|.
 $$
 \end{definition}

  If $\omega(F)<\varepsilon$, we will say that $F$ is weakly approximately constant to within $\epsilon$.

\begin{lem}\label{homstyle}
  Let $\phi:M_r(I[m_0,m,m_1])\rightarrow M_n$ be a homomorphism.
  There exist integers $s_0,s_1$ with $0\leq s_0\leq m/m_0-1$, $0\leq s_1\leq m/m_1-1$ and a unitary $u\in U(M_n)$ such that $\phi$ is of the standard form
  $$
  \phi(f)=u\cdot{\rm diag}\big(\underbrace{\pi_0(f),\cdots,\pi_0(f)}_{s_0\,\,times},
  \underbrace{\pi_1(f),\cdots,\pi_1(f)}_{s_1\,\,times},\widehat{\phi}(f)\big)\cdot u^*
  $$
  where $\widehat{\phi}$ is the restriction to the subalgebra $M_r(I[m_0$,$m,m_1])$ of some homomorphism $\widehat{\phi}:M_r(M_m(C[0,1]))\rightarrow M_n$. The integers $s_0,s_1$ are uniquely determined by the $\mathrm{KK}$-class of $\phi$.
\end{lem}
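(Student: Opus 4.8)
The plan is to read off the standard form from the representation theory of $A:=M_r(I[m_0,m,m_1])$ and then to extract the residues $s_0,s_1$ from the induced $\mathrm{KK}$-class. First I would use $M_r(I[m_0,m,m_1])\cong I[rm_0,rm,rm_1]$ to absorb the matrix amplification, noting that the ratios $rm/(rm_0)=m/m_0$ and $rm/(rm_1)=m/m_1$ are unchanged; throughout, $\pi_0,\pi_1$ and $\pi_t\ (t\in(0,1))$ denote the irreducible representations of Definition \ref{irrep}, and I write $C:=M_r(M_m(C([0,1])))$ with evaluation maps $\mathrm{ev}_x\colon C\to \mathrm{M}_{rm}$.

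For existence, since $\phi(A)\subseteq \mathrm{M}_n$ is finite-dimensional, $\phi$ is a finite-dimensional $*$-representation and hence, up to conjugation by a single unitary $u\in U(\mathrm{M}_n)$, a direct sum of irreducibles together with a (possibly nonzero) zero summand. By Definition \ref{irrep} these irreducibles are exactly $\pi_0,\pi_1$ and the interior evaluations $\pi_{t}$, so
$$\phi\sim_u \pi_0^{\oplus s_0'}\oplus\pi_1^{\oplus s_1'}\oplus\bigoplus_{i=1}^k\pi_{t_i}\oplus 0,\qquad t_i\in(0,1).$$
The key observation is that $\pi_t=\mathrm{ev}_t|_A$ for $t\in(0,1)$, while the boundary conditions $f(0)=a_0\otimes 1_{m/m_0}$ and $f(1)=a_1\otimes 1_{m/m_1}$ give $\mathrm{ev}_0|_A=\pi_0^{\oplus m/m_0}$ and $\mathrm{ev}_1|_A=\pi_1^{\oplus m/m_1}$. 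I would then divide with remainder, $s_0'=q_0(m/m_0)+s_0$ and $s_1'=q_1(m/m_1)+s_1$ with $0\le s_0<m/m_0$ and $0\le s_1<m/m_1$, and bundle the $q_0$ full groups of $\pi_0$ and the $q_1$ full groups of $\pi_1$ into $\mathrm{ev}_0$ and $\mathrm{ev}_1$. Setting $\widehat\phi:=\mathrm{ev}_0^{\oplus q_0}\oplus\mathrm{ev}_1^{\oplus q_1}\oplus\bigoplus_i\mathrm{ev}_{t_i}\oplus 0\colon C\to\mathrm{M}_n$, a genuine homomorphism of $C$ since it is a direct sum of point evaluations, yields $\widehat\phi|_A$ as the bundled part, and the leftover $s_0$ copies of $\pi_0$ and $s_1$ copies of $\pi_1$ give precisely the claimed standard form.

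For uniqueness, the point is that $\phi_*$ on $\mathrm{K}_0(A)=\mathbb{Z}$ alone cannot separate $s_0,s_1$ from the number of interior points, so I would pass to the full $\mathrm{KK}$-class. Writing $\iota\colon A\hookrightarrow C$, the identities $\mathrm{ev}_0|_A=\pi_0^{\oplus m/m_0}$, $\mathrm{ev}_1|_A=\pi_1^{\oplus m/m_1}$ together with the homotopy $\mathrm{ev}_t\simeq\mathrm{ev}_0\simeq\mathrm{ev}_1$ in $C$ give, in $\mathrm{KK}(A,\mathrm{M}_n)$,
$$[\widehat\phi|_A]\in\iota^*\,\mathrm{KK}(C,\mathrm{M}_n),\qquad (m/m_0)[\pi_0]=(m/m_1)[\pi_1]=\iota^*[\mathrm{ev}_0],$$
so that modulo $\iota^*\mathrm{KK}(C,\mathrm{M}_n)$ one has $[\phi]\equiv s_0[\pi_0]+s_1[\pi_1]$. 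I would then compute, from the six-term sequence of the Remark (with $\mathrm{K}_0(A)=\mathbb{Z}$, $\mathrm{K}_1(A)=\mathbb{Z}_p$, $p=(m/m_0,m/m_1)$) and the induced maps on $\underline{\mathrm{K}}$, that the images of $[\pi_0]$ and $[\pi_1]$ in $\mathrm{coker}(\iota^*)$ have orders exactly $m/m_0$ and $m/m_1$ and satisfy no further relation on the range $0\le s_0<m/m_0$, $0\le s_1<m/m_1$; hence $(s_0,s_1)$ is recovered uniquely from $[\phi]$.

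The existence half is essentially bookkeeping once finite-dimensional complete reducibility is invoked. The real difficulty is the uniqueness half: because $\mathrm{K}_0$ conflates the boundary multiplicities with the interior multiplicity, one must use the torsion part $\mathrm{K}_1(A)=\mathbb{Z}_p$ (equivalently the $\mathrm{Ext}(\mathbb{Z}_p,\mathbb{Z})$ summand of the K-homology, or the $\mathbb{Z}_q$-coefficient groups linked by the Bockstein operations) to distinguish the two endpoints. I expect the crux to be verifying that $[\pi_0]$ and $[\pi_1]$ generate cyclic subgroups of $\mathrm{coker}(\iota^*)$ of orders exactly $m/m_0$ and $m/m_1$, and in particular that no hidden torsion relation permits trading copies of $\pi_0$ for copies of $\pi_1$ within the prescribed range — this is exactly what pins $s_0$ and $s_1$ into $\{0,\dots,m/m_0-1\}$ and $\{0,\dots,m/m_1-1\}$ and makes them $\mathrm{KK}$-invariants.
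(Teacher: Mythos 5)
Your argument is correct and follows exactly the route the paper itself takes: the paper's proof is a one-line citation to the representation theory of $I[m_0,m,m_1]$ (Definition 2.7) and its KK-theory (cf.\ Lemma 3.5 of Jiang--Su), which is precisely what you flesh out — complete reducibility plus division with remainder for existence, and the identification of $\mathrm{KK}(A,\mathbb{C})$ as $\mathbb{Z}^2/\langle(m/m_0,-m/m_1)\rangle$ with cokernel $\mathbb{Z}_{m/m_0}\oplus\mathbb{Z}_{m/m_1}$ over $\iota^*\mathrm{KK}(C,\mathbb{C})=\langle (m/m_0)[\pi_0]\rangle$ for uniqueness. The one computation you defer (that $[\pi_0],[\pi_1]$ generate that cokernel freely in the stated ranges) is indeed true and follows directly from Theorem 4.2 of the paper applied with $B=\mathbb{C}=I[1,1,1]$.
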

 \begin{proof}
 This follows from the representation theory of $I[m_0,m,m_1]$ (see Definition \ref{irrep}) and its KK-theory; see also Lemma 3.5 of \cite{JS}.
 \end{proof}
  Let $\phi:M_r(I[m_0,m,m_1])\rightarrow M_k(I[n_0,n,n_1])$ be a unital homomorphism, for any $t\in (0,1)$, denote $\phi_t$ by $\phi_t(f)=\phi(f)(t)$. Then for any $t_1,t_2\in [0,1]$, $\phi_{t_1}$ is homotopic to $\phi_{t_2}$, and so $\mathrm{KK}(\phi_{t_1})=\mathrm{KK}(\phi_{t_2})$. By Lemma \ref{homstyle}, for each $t\in [0,1]$, there exists $u_t\in M_{kn}$ such that
   $$
  \phi_t(f)=u_t\cdot
  {\rm diag}\big(\underbrace{\pi_0(f),\cdots,\pi_0(f)}_{s_0},
  \underbrace{\pi_1(f),\cdots,\pi_1(f)}_{s_1},
  \widehat{\phi}_t\big)
  \cdot
  u_t^*
  $$
  where $s_0$ and $s_1$ are unique such that $0\leq s_0\leq m/m_0$
  and $0\leq s_1\leq m/m_1$, and
   $\widehat{\phi}_t:M_r(M_m(C[0,1]))\rightarrow M_n$ is of the form
  $$
  \widehat{\phi}_t(f)={\rm diag}\{f(\lambda_1(t)),\cdots,f(\lambda_s(t)\},
  $$
  and $\lambda_1(t),\cdots,\lambda_s(t)\in [0,1]$. Moreover, the $\lambda_i(t)$ if labelled in increasing order are continuous with respect to $t$; see also Corollary 3.6 of \cite{JS}.
  (the unitaries $u_t$ may not be possible to choose continuously.)

  We use the notation $\#(\cdot)$ to denote the cardinal number of the set (counting multiplicity), and we use $\{\delta^{\thicksim k}\}$ to denote
  $\{\underbrace{\delta,\cdots,\delta}_{k}\}$.

\begin{definition}\label{defspv}
  Let $A$ and $B$ be generalized dimension drop interval algebras, and $\phi:M_r(A)\rightarrow M_k(B)$ be a homomorphism. Let us define the spectral homomorphism $\phi_t$ by
  $$
  sp\phi_t=\{\delta_0^{\,\sim \,s_0},\delta_1^{\,\sim\,
  s_1},\lambda_1(t),\cdots,\lambda_s(t)\}.
  $$
  Then for each $t\in [0,1]$, $\#(sp\phi_t\cap[0,1])=s$. Let us define the spectral variation of $\phi$ by
  $$
  spv(\phi)=\max_{t_1,t_2\in [0,1]}dist
  (sp(\phi_{t_1})\cap[0,1],sp(\phi_{t_2})\cap[0,1])
  $$
  where $sp(\phi_t)\cap(0,1)$ is regarded as an element of $P^s([0,1])$, the symmetric Cartesian product of $s$ copies of $[0,1]$. The distance is taken in $P^s([0,1])$ (see \cite{EG}).
\end{definition}

  We need the following lemma, which is a minor generalization of Lemma 3.6 in \cite{DG}.
\begin{lem}\label{wvinq}
  Suppose that $A$ and $B$ are generalized dimension drop interval algebras and let
  $F\subset M_r(A)$ be a finite subset. Then for any homomorphism $\phi:M_r(A)\rightarrow M_k(B)$, one has
  $$
  \omega(\phi(F))\leq\sup_{d(\lambda,\lambda')\leq spv\phi}\max_{f\in F}\|f(\lambda)-f(\lambda')\|,
  $$
  where $d(\lambda,\lambda')$ is the distance between the points $\lambda,\lambda'\in(0,1)$.
\end{lem}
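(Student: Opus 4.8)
The plan is to reduce the estimate to the standard form for point evaluations obtained in Lemma~\ref{homstyle} and the discussion following it, and then to read off the weak variation of $\phi(F)$ directly from the eigenvalue functions $\lambda_i(t)$. The whole point is that conjugating the two point evaluations $\phi_s$ and $\phi_t$ into standard form makes their difference purely a matter of how far the interior spectral points have moved, which is exactly what $spv(\phi)$ controls.

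First I would fix $s,t\in[0,1]$ and put both $\phi_s$ and $\phi_t$ in standard form. By the discussion after Lemma~\ref{homstyle} there are unitaries $u_s,u_t\in U(kn)$ and continuous, increasingly labelled eigenvalue functions $\lambda_1(\cdot)\leq\cdots\leq\lambda_s(\cdot)$ such that, for every $f\in M_r(A)$ and $\tau\in\{s,t\}$,
$$
\phi_\tau(f)=u_\tau\cdot{\rm diag}\big(\pi_0(f)^{\,\sim\,s_0},\,\pi_1(f)^{\,\sim\,s_1},\,f(\lambda_1(\tau)),\ldots,f(\lambda_s(\tau))\big)\cdot u_\tau^*.
$$
The crucial observation is that the two endpoint blocks carry the \emph{same} multiplicities $s_0,s_1$ for all $\tau$ (these are fixed by the $\mathrm{KK}$-class of $\phi$, which is independent of $\tau$) and that $\pi_0(f),\pi_1(f)$ do not depend on $\tau$; only the interior evaluation points $\lambda_i(\tau)$ move with $\tau$.

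Next I would take $w=u_tu_s^*\in U(kn)$ as the test unitary in the definition of $\omega$. Then $w\phi_s(f)w^*=u_t D_s(f)u_t^*$ and $\phi_t(f)=u_t D_t(f)u_t^*$, where $D_\tau(f)$ denotes the block-diagonal middle factor above; since the two endpoint blocks cancel, this yields
$$
\|w\phi_s(f)w^*-\phi_t(f)\|=\max_{1\leq i\leq s}\|f(\lambda_i(s))-f(\lambda_i(t))\|.
$$
Matching the $i$-th interior block to the $i$-th interior block is legitimate precisely because both families $\{\lambda_i(s)\}$ and $\{\lambda_i(t)\}$ are labelled in increasing order, and for sorted tuples the identity matching realizes the distance in the symmetric product $P^s([0,1])$. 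Consequently $\max_i|\lambda_i(s)-\lambda_i(t)|=\mathrm{dist}\big(sp(\phi_s)\cap[0,1],\,sp(\phi_t)\cap[0,1]\big)\leq spv(\phi)$, so each interior pair satisfies $d(\lambda_i(s),\lambda_i(t))\leq spv(\phi)$.

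Finally I would bound each term $\|f(\lambda_i(s))-f(\lambda_i(t))\|$ by $\sup_{d(\lambda,\lambda')\leq spv(\phi)}\|f(\lambda)-f(\lambda')\|$, then take the maximum over $f\in F$, the infimum over unitaries (which is dominated by the value at our chosen $w$), and finally the supremum over $s,t$, giving exactly the claimed inequality for $\omega(\phi(F))$. The one point needing a little care — the main, though minor, obstacle — is that some $\lambda_i(\tau)$ may land at an endpoint $0$ or $1$, whereas the supremum on the right-hand side ranges only over $\lambda,\lambda'\in(0,1)$; this is settled by continuity of $f$ on $[0,1]$, which lets one approximate $f(0)$ and $f(1)$ by interior values without increasing the bound. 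Modulo this continuity remark, the estimate follows immediately from the standard form.
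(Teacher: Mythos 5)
Your argument is correct and is essentially the intended one: the paper omits the proof, citing Lemma 3.6 of \cite{DG}, whose argument is exactly this reduction to the standard form of Lemma~\ref{homstyle}, with the test unitary $u_tu_s^*$, cancellation of the ($t$-independent) endpoint blocks of multiplicities $s_0,s_1$, and the sorted matching of the interior eigenvalues controlled by $spv(\phi)$. Your closing observation about endpoint values versus the restriction to $\lambda,\lambda'\in(0,1)$ is a legitimate (and correctly resolved) point of care.
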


 The following lemma is clear by the standard techniques of spectral theory; see \cite{BBEK}.
\begin{lem}\label{rrpro}
  Let $A=\underrightarrow{lim}(A_n,\nu_{n,m})$ be an inductive limit of C*-algebras $A_n$ with morphisms
  $\nu_{n,m}:A_n\rightarrow A_m$. Then A is of real rank zero if and only if for any finite subset $F\subset (A_n)_{sa}$ and $\varepsilon >0$, there exists $m\geq n$ such
  that for any $r\geq m$, $$\nu_{n,r}(F)\subset_{\epsilon}\{ f\in (A_r)_{sa}\mid\, \text{f has finite spectrum}\}.$$
\end{lem}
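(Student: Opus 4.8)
The plan is to reduce the equivalence to the Brown--Pedersen characterization of real rank zero, namely that a C*-algebra has real rank zero precisely when its self-adjoint elements of finite spectrum are dense in its self-adjoint part; this is the ``standard spectral theory'' alluded to, and I would treat it as the working description of real rank zero throughout. Writing $\nu_{n,\infty}\colon A_n\to A$ for the canonical maps into the limit, I would record two facts to be used repeatedly: a $*$-homomorphism sends a self-adjoint element of finite spectrum to one of finite spectrum, and for $x\in A_m$ the numbers $\|\nu_{m,r}(x)\|$ decrease in $r$ to $\|\nu_{m,\infty}(x)\|=\inf_r\|\nu_{m,r}(x)\|$.

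The ``if'' direction is the easy one. Given $a\in A_{sa}$ and $\varepsilon>0$, I would first pick $n$ and $f\in(A_n)_{sa}$ (replacing an approximant by its real part, legitimate since $a$ is self-adjoint) with $\|\nu_{n,\infty}(f)-a\|<\varepsilon/2$. Applying the hypothesis to $F=\{f\}$ and $\varepsilon/2$ yields $m\ge n$ and, at a fixed $r\ge m$, a finite-spectrum $b\in(A_r)_{sa}$ with $\|\nu_{n,r}(f)-b\|<\varepsilon/2$. Then $\nu_{r,\infty}(b)$ has finite spectrum and, as $\nu_{r,\infty}$ is norm-nonincreasing, $\|\nu_{r,\infty}(b)-a\|<\varepsilon$. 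Hence finite-spectrum self-adjoint elements are dense in $A_{sa}$, so $A$ has real rank zero.

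For the ``only if'' direction, fix a finite set $F\subset(A_n)_{sa}$ and $\varepsilon>0$, and use real rank zero of $A$ to approximate each $\nu_{n,\infty}(f)$ to within $\varepsilon/3$ by a finite-spectrum $a_f\in A_{sa}$. The task is then to realize a nearby finite-spectrum element at a single finite stage $A_r$. I would do this by lifting $\nu_{n,r}(f)$ (for suitable $r$) to a self-adjoint element whose spectrum clusters near that of $a_f$: since $\nu_{r,\infty}(\nu_{n,r}(f))=\nu_{n,\infty}(f)$ has spectrum close to the finite set $\mathrm{sp}(a_f)$, a continuous function $h$ supported off a small neighborhood of $\mathrm{sp}(a_f)$ satisfies $\|\nu_{r,\infty}(h(\nu_{n,r}(f)))\|$ small, so by the monotonicity of the norms this norm is already small at some finite stage, forcing $\mathrm{sp}(\nu_{n,r'}(f))$ to cluster near $\mathrm{sp}(a_f)$ for $r'$ large. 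Applying a function that is locally constant on those clusters then produces a finite-spectrum $b_f\in(A_{r'})_{sa}$ with $\|\nu_{r',\infty}(b_f)-\nu_{n,\infty}(f)\|<2\varepsilon/3$, that is $\|\nu_{r',\infty}(\nu_{n,r'}(f)-b_f)\|<2\varepsilon/3$. (Equivalently one may lift the spectral projections of $a_f$ to a mutually orthogonal family of projections at a finite stage by a routine perturbation argument; the functional-calculus route is slightly cleaner since orthogonality is then automatic.)

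The delicate, and I expect main, obstacle is converting the last estimate, which lives in the norm of $A$, into one valid in every $A_{r''}$ beyond a common index, since a priori the $A_{r''}$-norm dominates the $A$-norm and the conclusion is required for all $r''\ge m$, not merely in the limit. Here the monotonicity recalled at the outset does the work: because $\|\nu_{r',r''}(\nu_{n,r'}(f)-b_f)\|$ decreases to $\|\nu_{r',\infty}(\nu_{n,r'}(f)-b_f)\|<2\varepsilon/3$, there is an $m\ge r'$, chosen uniformly over the finite set $F$, with $\|\nu_{r',r''}(\nu_{n,r'}(f)-b_f)\|<\varepsilon$ for all $r''\ge m$; for each such $r''$ the element $\nu_{r',r''}(b_f)\in(A_{r''})_{sa}$ has finite spectrum and lies within $\varepsilon$ of $\nu_{n,r''}(f)$, giving exactly $\nu_{n,r''}(F)\subset_{\varepsilon}\{f\in(A_{r''})_{sa}: f\text{ has finite spectrum}\}$. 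Thus once the estimate holds at the index $m$ it persists for all larger indices precisely because of the nonincreasing behavior of $\|\nu_{r',r''}(\cdot)\|$, which is the crux of the argument.
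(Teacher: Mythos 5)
The paper itself offers no argument for this lemma beyond the citation of \cite{BBEK}, so the comparison is really with the standard proof. Your overall skeleton is the right one: the Brown--Pedersen description of real rank zero, the monotone convergence $\|\nu_{m,r}(x)\|\downarrow\|\nu_{m,\infty}(x)\|$, an easy ``if'' direction, and, for ``only if'', the production of a finite-spectrum approximant at a finite stage followed by the observation that an estimate $\|\nu_{r',\infty}(\,\cdot\,)\|<\varepsilon'<\varepsilon$ persists as $\|\nu_{r',r''}(\,\cdot\,)\|<\varepsilon$ for all $r''$ beyond some $m$. The ``if'' direction and that final monotonicity step are correct as written.

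The gap is in the step you present as the main route for ``only if'': producing $b_f$ by applying a locally constant function to $\nu_{n,r'}(f)$. Knowing that $\|\nu_{n,\infty}(f)-a_f\|<\varepsilon/3$ with $\mathrm{sp}(a_f)$ finite only places $\mathrm{sp}(\nu_{n,\infty}(f))$ inside the $\varepsilon/3$-neighbourhood of $\mathrm{sp}(a_f)\cup\{0\}$; it does not prevent the points of $\mathrm{sp}(a_f)$ from chaining together so that this neighbourhood (and hence $\mathrm{sp}(\nu_{n,r'}(f))$, however large $r'$ is) has a connected component of diameter much larger than $\varepsilon$. A continuous function that is locally constant on such a component is constant there, so no $g(\nu_{n,r'}(f))$ can lie within $\varepsilon$ of $\nu_{n,r'}(f)$. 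This is not a pathological worry: in the very systems of this paper, $\nu_{n,r}(f)$ can have spectrum equal to a whole interval for every $r$ (e.g.\ $f$ a coordinate function in a homogeneous summand), while the true finite-spectrum approximant in $A_{r}$ is an element with constant eigenvalue functions that does \emph{not} commute with $\nu_{n,r}(f)$ --- the whole point of real rank zero is that the approximant need not be a function of the element. The correct mechanism is the one you relegate to a parenthesis: write $a_f=\sum_j\lambda_jp_j$ with $p_j$ mutually orthogonal projections in $A$, lift these (by semiprojectivity of $\mathbb{C}^k$, i.e.\ the routine perturbation argument) to mutually orthogonal projections $q_j$ in some $A_{r'}$ with $\|\nu_{r',\infty}(q_j)-p_j\|$ small, and set $b_f=\sum_j\lambda_jq_j$; then $\|\nu_{r',\infty}(b_f)-\nu_{n,\infty}(f)\|<\varepsilon'<\varepsilon$ and your concluding monotonicity argument finishes the proof. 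Promote that parenthetical to the main argument and discard the functional-calculus construction, and the proof is sound.
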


   On combining the above lemma with Theorem 2.5 of \cite{Su}, a slight generalization will yield the following conclusion.
\begin{thrm}\label{spsmall}
  Let $A=\underrightarrow{\lim}(A_n,\nu_{n,m})$ be
  a real rank zero $C^*$-algebra inductive limit of direct sums of generalized dimension drop
  interval algebra. Then for any $n$ and any $\delta>0$, there is $m\geq n$ such that $spv(\nu_{n,r})<\delta$ for all $r\geq m$.
\end{thrm}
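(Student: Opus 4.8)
The plan is to derive the conclusion from the finite-spectrum criterion for real rank zero (Lemma \ref{rrpro}) by an eigenvalue analysis of the connecting maps in the standard form recorded after Lemma \ref{homstyle}; this realizes, in the generalized dimension drop setting, the mechanism of Theorem 2.5 of \cite{Su}, the only new feature being the bookkeeping for the endpoint (dimension-drop) evaluations $\pi_0$ and $\pi_1$. First I would fix $n$ and $\delta>0$, set $\varepsilon=\delta/3$, and write $A_n=\bigoplus_i M_{l_i}(I[m_{0,i},m_i,m_{1,i}])$. For each summand I would introduce the self-adjoint coordinate element $h_i$ defined by $h_i(t)=t\cdot 1$ on the $i$-th summand and $0$ elsewhere; this genuinely lies in $(A_n)_{sa}$, since $h_i(0)=0$ and $h_i(1)=1$ both respect the dimension-drop conditions. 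Setting $F=\{h_i\}_i$ and applying Lemma \ref{rrpro} to $F$ and $\varepsilon$ produces $m\geq n$ such that for every $r\geq m$ each $\nu_{n,r}(h_i)$ lies within $\varepsilon$ of a self-adjoint element of $A_r$ with finite spectrum.

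Next I would fix $r\geq m$ and examine a single component $\phi\colon M_l(I[m_0,m,m_1])\to M_k(I[n_0,n,n_1])$ of $\nu_{n,r}$, applied to the associated coordinate element $h$. Using the standard form described after Lemma \ref{homstyle}, and the fact that eigenvalues are unitarily invariant (so the unitaries $u_t$ are irrelevant), the eigenvalue list of $\phi_t(h)$ equals $\{0^{\sim s_0 l m_0},\,1^{\sim s_1 l m_1},\,\lambda_1(t)^{\sim lm},\dots,\lambda_s(t)^{\sim lm}\}$, because $\pi_0(h)=0$, $\pi_1(h)=1$, and $h(\lambda_i(t))=\lambda_i(t)\cdot 1$. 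A self-adjoint element with finite spectrum has continuous eigenvalue functions taking finitely many values on the connected interval $[0,1]$, hence constant on each target summand; so by Weyl's perturbation inequality each ordered eigenvalue function of $\phi_t(h)$ stays within $\varepsilon$ of a fixed constant, and therefore varies by at most $2\varepsilon$ as $t$ ranges over $[0,1]$.

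The decisive point, and the step I expect to be the main obstacle, is to pass from the variation of the full ordered spectrum to the variation of the interior spectral data $\{\lambda_1(t),\dots,\lambda_s(t)\}$ alone, since a priori an optimal matching could let the endpoint contributions absorb the motion of the $\lambda_i(t)$. This is exactly where the choice $h(t)=t$ is essential: the endpoint eigenvalues are pinned at the extreme values $0$ and $1$, so in the sorted order the $s$ interior eigenvalues $\lambda_1(t)\leq\cdots\leq\lambda_s(t)$ always occupy the same middle block of positions, with the $0$'s strictly below and the $1$'s strictly above. Since the metric on $P^s([0,1])$ is realized by the monotone matching, the per-position estimate of $2\varepsilon$ descends verbatim to $\mathrm{dist}\big(sp(\phi_{t_1})\cap[0,1],\,sp(\phi_{t_2})\cap[0,1]\big)\leq 2\varepsilon$.

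Taking the maximum over $t_1,t_2$ gives $spv(\phi)\leq 2\varepsilon<\delta$ for each component. The reduction to components is then routine: the spectral variation of a map of direct sums is the maximum of those of its components, and $F$ was chosen precisely to exercise every source summand (the restriction of the finite-spectrum approximant to any target summand still has finite, hence constant, spectrum there). Hence $spv(\nu_{n,r})<\delta$ for all $r\geq m$, as required.
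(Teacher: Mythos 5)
Your proof is correct and follows essentially the route the paper intends: the paper offers no written argument for this theorem, merely asserting that it follows by combining Lemma \ref{rrpro} with (a slight generalization of) Theorem 2.5 of \cite{Su}, and your coordinate-function/eigenvalue-rigidity argument is precisely that mechanism carried out in the generalized dimension drop setting. In particular, your handling of the endpoint multiplicities --- checking that the $s_0lm_0$ zeros and $s_1lm_1$ ones occupy fixed extreme positions in the sorted spectrum of $\phi_t(h)$, so that the per-position Weyl estimate descends via the monotone matching to the interior list $\{\lambda_1(t),\dots,\lambda_s(t)\}$ --- correctly supplies the one detail that is new relative to \cite{Su}.
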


Combining Lemma \ref{wvinq}, Lemma \ref{rrpro}, and Theorem \ref{spsmall}, we deduce the following corollary.
\begin{cor}\label{wvsmall}
   Let $A=\underrightarrow{\lim}(A_n,\nu_{n,m})$ be
  a real rank zero $C^*$-algebra inductive limit of direct sums of generalized dimension drop
  interval algebras, and let $F\subset A_n$ be a finite set.
  Then $\lim\limits_{r\rightarrow\infty}\omega(\nu_{n,r}(F))=0$.
\end{cor}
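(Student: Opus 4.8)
The plan is to combine the uniform continuity of the finite set $F$ with the two quantitative inputs already at hand: the spectral-variation bound of Lemma~\ref{wvinq} and the decay of spectral variation from Theorem~\ref{spsmall}. Fix $\varepsilon>0$; the goal is to produce $m\geq n$ so that $\omega(\nu_{n,r}(F))<\varepsilon$ for every $r\geq m$, which is exactly the asserted limit.

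First I would reduce to the single-block setting. Writing $A_n=\bigoplus_i A_n^i$ and $A_r=\bigoplus_j A_r^j$, with each summand of the form $M_{\bullet}(I[\cdot,\cdot,\cdot])$, the weak variation of a finite subset of the direct sum $A_r$ is the maximum over the target blocks $j$ of the weak variation of the corresponding block-components, and the $j$-th block-component of $\nu_{n,r}(F)$ is built from the partial homomorphisms $A_n^i\to A_r^j$. Likewise $spv(\nu_{n,r})$ is the maximum of the spectral variations of these partial maps. Hence it suffices to control each partial map, which is a homomorphism between single generalized dimension drop interval algebras, so that Lemma~\ref{wvinq} applies directly.

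The key observation is that the right-hand side of Lemma~\ref{wvinq} is expressed through the original functions $f\in F\subset A_n$, and not through their images; in particular the relevant modulus of continuity depends only on $F$ and $\varepsilon$, and is independent of $r$. Since $F$ is finite and each $f$ is a (tuple of) norm-continuous, hence uniformly continuous, matrix-valued function on $[0,1]$, there is $\delta>0$ such that $d(\lambda,\lambda')\leq\delta$ implies $\max_{f\in F}\|f(\lambda)-f(\lambda')\|\leq\varepsilon/2$ on every block. Applying Theorem~\ref{spsmall} with this $\delta$ yields $m\geq n$ with $spv(\nu_{n,r})<\delta$ for all $r\geq m$, the same $m$ serving simultaneously for the finitely many blocks (one takes the largest of the finitely many stages produced blockwise). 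For such $r$, Lemma~\ref{wvinq} gives, on each block,
$$
\omega(\nu_{n,r}(F)) \leq \sup_{d(\lambda,\lambda')\leq spv(\nu_{n,r})}\max_{f\in F}\|f(\lambda)-f(\lambda')\| \leq \sup_{d(\lambda,\lambda')\leq\delta}\max_{f\in F}\|f(\lambda)-f(\lambda')\| \leq \tfrac{\varepsilon}{2} < \varepsilon,
$$
and taking the maximum over the finitely many target blocks preserves the bound, establishing $\lim_{r\to\infty}\omega(\nu_{n,r}(F))=0$.

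The only genuine bookkeeping obstacle I anticipate is the passage through the direct-sum decomposition: one must verify that both the weak variation and the spectral variation of a map between direct sums are governed, via the finitely many partial maps, by the single-block quantities appearing in Lemma~\ref{wvinq} and Theorem~\ref{spsmall}, and that Theorem~\ref{spsmall} can be invoked once to handle all blocks at a common stage $m$. This is routine given the finiteness of the direct sums. Everything else is the uniform-continuity estimate, and the role of Lemma~\ref{rrpro} is already absorbed into the proof of Theorem~\ref{spsmall}.
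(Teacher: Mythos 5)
Your argument is correct and is precisely the combination the paper intends: Theorem~\ref{spsmall} supplies $spv(\nu_{n,r})<\delta$ for large $r$, Lemma~\ref{wvinq} converts this into a bound on $\omega(\nu_{n,r}(F))$ by the modulus of continuity of the finitely many functions in $F$, and uniform continuity finishes the proof, with the direct-sum bookkeeping handled blockwise exactly as you describe. This matches the paper's (unwritten) proof, so nothing further is needed.
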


 Note that a generalized dimension drop interval algebra has in general two different algebras at the endpoints.
 By Theorem 3.1 and Remark 3.1 in \cite{Liu}, we have the following theorem.
\begin{thrm}\label{decom}
Let $A$ and $B$ be generalized dimension drop interval algebras, $G\subset M_r(A)$ be a finite set, $\epsilon>0$, and $L$ be a positive integer. Then there exists a finite set $G'\subset M_r(A)$ such that if a homomorphism $\nu: M_r(A)\rightarrow M_k(B)$ satisfies
$$
\nu(G')\subset_{1/6}\{ f\in M_k(B)\mid \,\text{f has finite spectrum}\},
$$
then there exist a projection $p\in M_k(B)$ and a unital homomorphism $\lambda: M_r(A)\rightarrow (1-p)M_k(B)(1-p)$ with finite dimensional image such that

  $(1)\,\,[\lambda(1_{M_r(A)})]\geq L\cdot [p]$ in $K_0(B)$,

  $(2)\,\,\|\nu(a)p-p\nu(a)\|<\epsilon\;\text{for all}\; a\in G$, and

  $(3)\,\,\|\nu(a)-p\nu(a)p-\lambda(a)\|<\epsilon\;\text{for all}\; a\in G$.
\end{thrm}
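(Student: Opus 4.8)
The plan is to first convert the finite-spectrum hypothesis into a bound on the spectral variation $spv(\nu)$, and then to run a spectral-projection decomposition in the spirit of \cite{Su} and \cite{Liu}. I would take $G'$ to consist of $G$ together with a family of scalar test functions in $M_r(A)$: the coordinate function $x\mapsto x\cdot 1_{rm}$ and a family of tent functions supported on the intervals of a mesh of width $w$ on $[0,1]$. (Each such function is scalar at $0$ and $1$, so it respects the dimension drop and indeed lies in $M_r(A)$.) The mesh $w$, and hence the cardinality of $G'$, will be fixed only at the end, in terms of $G$, $\epsilon$, $L$, and a modulus of continuity for the elements of $G$.

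The first step is to show that $\nu(G')\subset_{1/6}\{f:\ f\ \text{has finite spectrum}\}$ forces $spv(\nu)$ to be small. An element $h\in M_k(B)$ of finite spectrum has, by continuity of $t\mapsto h(t)$ and connectedness of $[0,1]$, a constant multiset of eigenvalues; hence if $\|\nu(g')-h\|<1/6$ then by Weyl's perturbation inequality the eigenvalue multiset of $\nu(g')(t)$ stays within $1/3$ of a fixed multiset as $t$ varies. Feeding the tent functions into this estimate, any displacement of the interior spectrum $\{\lambda_1(t),\dots,\lambda_s(t)\}$ of Definition \ref{defspv} by an amount large compared with $w$ would be registered by some tent as a change of nearly $1$, which is impossible. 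Thus $spv(\nu)<\delta$ for a $\delta$ that tends to $0$ as $w\to0$; this is the single-homomorphism analogue of Theorem \ref{spsmall}.

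With $spv(\nu)<\delta$ and $\delta$ small compared with the gaps between the resulting clusters, I would carry out the decomposition on the interior. The paths $\lambda_i(t)$ then separate into well-isolated clusters that never collide, so for each cluster $j$ the spectral projection $P_j(t)$ cutting out the eigenvalues near a representative $c_j$ is a continuous projection-valued function of $t$, and $\nu(a)(t)P_j(t)$ differs from $a(c_j)P_j(t)$ by at most the modulus of continuity of $a\in G$ at scale $\delta$. Summing over $j$ produces, away from the endpoints, a finite-dimensional homomorphism approximating $\nu$ on $G$, and this already yields the commutation estimate (2) and the interior part of the approximation estimate (3).

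The main obstacle is the global construction: the diagonalizing unitaries need not extend continuously to $t=0,1$ (as noted after Lemma \ref{homstyle}), and at the endpoints the eigenvalues are forced into blocks of size $n/n_0$, respectively $n/n_1$, by the structure of $B$, so a genuinely finite-dimensional homomorphism $\lambda$ into $M_k(B)$ must respect these dimension drops. The projection $p$ is what absorbs this boundary-and-clutching defect, turning the interior approximant into an honest finite-dimensional homomorphism $\lambda$ into $(1-p)M_k(B)(1-p)$ and leaving the corner $p\nu(\cdot)p$ behind. The delicate quantitative point is estimate (1): since $p$ has constant rank on $[0,1]$, the requirement $[\lambda(1_{M_r(A)})]=[1-p]\ge L[p]$ amounts to making the rank of $p$ at most a $1/(L+1)$-fraction of the total. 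This is arranged by taking $\delta$ (hence $w$ and the size of $G'$) small enough that the spectrum is so sharply concentrated that the correction can be confined to a corner of relatively negligible rank, the admissible fraction tending to $0$ with $\delta$; this passage from the bare finite-dimensional-plus-small-corner splitting to its $L$-large refinement is exactly what Theorem 3.1 and Remark 3.1 of \cite{Liu} supply, and it is here that $L$ feeds back into the choice of $G'$. Granting this, properties (1)--(3) all hold.
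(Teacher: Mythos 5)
The paper offers no internal proof of this statement: it is quoted directly from Theorem 3.1 and Remark 3.1 of \cite{Liu}, so the only fair comparison is between your sketch and the argument that that reference actually carries out. Your overall strategy --- scalar test functions forcing $spv(\nu)$ to be small, a spectral decomposition of the interior eigenvalue pattern, a corner $p$ absorbing the endpoint/clutching defect, and a rank count for the $L$-largeness --- is indeed the standard route and matches what \cite{Liu} (following \cite{Su} and \cite{Ell}) does. The first step is sound; it is the single-homomorphism analogue of Theorem \ref{spsmall}.

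The genuine gap is in the middle step. From $spv(\nu)<\delta$ you conclude that ``the paths $\lambda_i(t)$ separate into well-isolated clusters that never collide,'' and then take continuous spectral projections $P_j(t)$. This does not follow: small spectral variation bounds how far each increasingly labelled $\lambda_i$ travels, but says nothing about the gaps between distinct $\lambda_i$'s, which can be far smaller than $\delta$ and can straddle any prescribed cluster boundary; when $s\delta\geq 1$ there need be no spectral gaps at all, a fixed boundary $c$ is generically crossed by some eigenvalue path, and the corresponding $P_j(t)$ jumps in rank. The correct argument partitions $[0,1]$ into cells of width $\eta\gg\delta$ with endpoints chosen (by a pigeonhole or measure argument relative to a reference fibre) so that only a controlled fraction, of order $\delta/\eta$, of the eigenvalues ever approach a cell boundary; one extracts the minimal eigenvalue count per cell as the constant finite-dimensional part $\lambda$, and it is exactly the boundary-straddling remainder, together with the endpoint multiplicities forced by $n_0$ and $n_1$, that goes into $p$; comparing $\delta/\eta$ with $1/(L+1)$ gives (1). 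You do acknowledge at the end that this ``$L$-large refinement'' is supplied by Theorem 3.1 of \cite{Liu} --- but that theorem is the entire statement being proved, not merely its last step, so the portion of your argument that is not a citation to \cite{Liu} is precisely the portion where the reasoning breaks down. To make the proof self-contained you would need to replace the isolated-cluster claim by the mesh-and-remainder construction just described, and verify continuity of the selected sub-projections using the standard form of $\nu_t$ from Lemma \ref{homstyle}.
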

Note that if all the elements of $G$ have norm at most one, then the map
 $\theta(a)=p\nu(a)p$ is $\epsilon$-multiplicative on $G$.


\section{existence results}

In this section, we will prove the local existence theorem for our classification. 

First, we present a concrete picture of KK-groups for generalized dimension drop interval algebras.

\begin{definition}\label{cmset}
Let $A=I[m_0,m,m_1]$ and $B=I[n_0,n,n_1]$ be generalized dimension drop interval algebras. Denote by $C(A,B)$ the set of all the commutative diagrams
  $$
\xymatrixcolsep{2pc}
\xymatrix{
{\,\,0\,\,} \ar[r]^-{}
& {\,\,\mathrm{K}_0(A)\,\,} \ar[d]_-{\lambda_{0*}} \ar[r]^-{(\pi_0\oplus\pi_1)_*}
& {\,\,\mathbb{Z}\oplus \mathbb{Z}\,\,} \ar[d]_-{\lambda_{0}} \ar[r]^-{(\frac{m}{m_0}, -\frac{m}{m_1})}
& {\,\, \mathbb{Z}\,\,} \ar[d]_-{\lambda_{1}} \ar[r]^-{\iota_*}
& {\,\,\mathrm{K}_1(A)\,\,} \ar[d]_-{\lambda_{1*}} \ar[r]^-{}
& {\,\,0\,\,}\\
{\,\,0\,\,} \ar[r]^-{}
& {\,\,\mathrm{K}_0(B)\,\,} \ar[r]_-{(\pi_0'\oplus\pi_1')_*}
& {\,\,\mathbb{Z}\oplus \mathbb{Z} \,\,} \ar[r]_-{(\frac{n}{n_0}, -\frac{n}{n_1})}
& {\,\, \mathbb{Z} \,\,} \ar[r]_-{\iota_*'}
& {\,\,\mathrm{K}_1(B)\,\,} \ar[r]^-{}
& {\,\,0\,\,},}
$$
where all maps are group homomorphisms.

Denote by $M(A,B)$ the subset of $C(A,B)$ of all the commutative diagrams
  $$
\xymatrixcolsep{2pc}
\xymatrix{
{\,\,0\,\,} \ar[r]^-{}
& {\,\,\mathrm{K}_0(A)\,\,} \ar[d]_{0} \ar[r]^-{(\pi_0\oplus\pi_1)_*}
& {\,\,\mathbb{Z}\oplus \mathbb{Z}\,\,} \ar[d]_{\mu_{0}} \ar[r]^-{(\frac{m}{m_0}, -\frac{m}{m_1})}
& {\,\, \mathbb{Z}\,\,} \ar@{.>}[dl]_{\mu} \ar[d]_{\mu_{1}} \ar[r]^-{\iota_*}
& {\,\,\mathrm{K}_1(A)\,\,} \ar[d]_{0} \ar[r]^-{}
& {\,\,0\,\,}\\
{\,\,0\,\,} \ar[r]^-{}
& {\,\,\mathrm{K}_0(B)\,\,} \ar[r]_{(\pi_0'\oplus\pi_1')_*}
& {\,\,\mathbb{Z}\oplus \mathbb{Z} \,\,} \ar[r]_{(\frac{n}{n_0}, -\frac{n}{n_1})}
& {\,\, \mathbb{Z} \,\,} \ar[r]_{\iota_*'}
& {\,\,\mathrm{K}_1(B)\,\,} \ar[r]^-{}
& {\,\,0\,\,}}
$$
such that there exists a group homomorphism $\mu$ satisfying $\mu_0=\mu \circ(\frac{m}{m_0}, -\frac{m}{m_1})$ and $\mu_1=(\frac{n}{n_0}, -\frac{n}{n_1})\circ \mu$ (i.e., two triangles commute). \end{definition}

The set $C(A, B)$ forms an abelian group under the addition of group homomorphisms, namely, we add two diagrams by adding up all corresponding group homomorphisms. $M(A, B)$ is a normal subgroup of $C(A, B)$, and the quotient group is actually isomorphic to $\mathrm{KK}(A, B)$; moreover, composition of diagrams corresponds to the Kasparov product of KK-elements.

This picture holds for general Elliott-Thomsen algebras.

\begin{thrm}[\cite{AE}, Theorem 2.9]\label{CM}
Let there be given two C*-algebras $A$ and $B$ in $\mathcal{C}$, where $\mathcal{C}$ denotes the class of Elliott-Thomsen algebras. Then we have a natural isomorphism of groups:
$$
\mathrm{KK}(A,B)\cong C(A,B)/M(A,B).
$$
\end{thrm}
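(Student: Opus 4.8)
The plan is to read both sides of the claimed isomorphism as two presentations of one homological object, match them, and then promote the resulting abstract isomorphism to a natural, product-compatible one. First I would reinterpret the diagrams homologically. For $A=I[m_0,m,m_1]$ the six-term sequence of the Remark is exactly the homology sequence of the two-term complex of free abelian groups $C^A_\bullet:\ \mathbb{Z}\oplus\mathbb{Z}\xrightarrow{\,\partial^A\,}\mathbb{Z}$ with $\partial^A=(\tfrac{m}{m_0},-\tfrac{m}{m_1})$, placed in degrees $1$ and $0$, so that $H_1(C^A)=\mathrm{K}_0(A)=\ker\partial^A$ and $H_0(C^A)=\mathrm{K}_1(A)=\mathrm{coker}\,\partial^A$ (for a general Elliott--Thomsen algebra one has instead $\mathbb{Z}^{r_0}\xrightarrow{\partial^A}\mathbb{Z}^{r_1}$, and everything below is unchanged). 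With this reading an element of $C(A,B)$ is precisely a chain map $(\lambda_0,\lambda_1):C^A_\bullet\to C^B_\bullet$: the commuting central square is the chain-map relation $\partial^B\lambda_0=\lambda_1\partial^A$, while $\lambda_{0*},\lambda_{1*}$ are the induced maps on homology and hence determined by $(\lambda_0,\lambda_1)$. I would then observe that the extra datum of $M(A,B)$, namely the diagonal $\mu:\mathbb{Z}\to\mathbb{Z}\oplus\mathbb{Z}$ with $\lambda_0=\mu\partial^A$ and $\lambda_1=\partial^B\mu$, is exactly a chain homotopy from $(\lambda_0,\lambda_1)$ to $0$; in particular the required vanishing $\lambda_{0*}=\lambda_{1*}=0$ is automatic. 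Therefore
$$C(A,B)/M(A,B)\ \cong\ [C^A_\bullet,C^B_\bullet]\ =\ \mathrm{Hom}_{D(\mathbb{Z})}(C^A_\bullet,C^B_\bullet),$$
the last identification holding because $C^A_\bullet,C^B_\bullet$ are bounded complexes of free modules, so homotopy classes of chain maps compute morphisms in the derived category.

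Next I would compute this derived Hom. As $\mathbb{Z}$ has global dimension one, $C^A_\bullet\simeq \mathrm{K}_1(A)\oplus\mathrm{K}_0(A)[1]$ in $D(\mathbb{Z})$, and likewise for $B$; expanding and using $\mathrm{Hom}_{D(\mathbb{Z})}(M,N[i])=\mathrm{Ext}^i_{\mathbb{Z}}(M,N)$ yields
$$\mathrm{Hom}(\mathrm{K}_0(A),\mathrm{K}_0(B))\oplus\mathrm{Hom}(\mathrm{K}_1(A),\mathrm{K}_1(B))\oplus\mathrm{Ext}^1_{\mathbb{Z}}(\mathrm{K}_1(A),\mathrm{K}_0(B)).$$
On the other side, generalized dimension drop algebras---indeed all Elliott--Thomsen algebras---lie in the bootstrap class, so the universal coefficient theorem presents $\mathrm{KK}(A,B)$ as an extension of $\mathrm{Hom}(\mathrm{K}_*(A),\mathrm{K}_*(B))$ by $\mathrm{Ext}^1_{\mathbb{Z}}(\mathrm{K}_*(A),\mathrm{K}_{*+1}(B))$; since $\mathrm{K}_0(A)=\ker\partial^A$ is free, the extension term collapses to $\mathrm{Ext}^1(\mathrm{K}_1(A),\mathrm{K}_0(B))$. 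Thus $\mathrm{KK}(A,B)$ and $C(A,B)/M(A,B)$ are assembled from identical Hom and Ext pieces, which already gives an abstract isomorphism of groups.

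To obtain the \emph{natural} isomorphism and the stated compatibility with the Kasparov product, I would realize the comparison geometrically instead of quoting the UCT splitting. Write the defining extension $0\to SF_1^A\xrightarrow{\iota} A\xrightarrow{\pi_0\oplus\pi_1} F_0^A\to 0$ with $F_0^A,F_1^A$ finite dimensional; its class $\beta^A\in\mathrm{KK}^1(F_0^A,SF_1^A)$ induces $\partial^A$ on $\mathrm{K}$-theory, and $A$ is KK-equivalent to the mapping cone of a representative of $\beta^A$. Because $F_0^A$ and $SF_1^A$ are (suspended) finite dimensional, $\mathrm{KK}$ agrees with $\mathrm{Hom}$ there, so the chain maps $\lambda_0,\lambda_1$ lift canonically to $\Lambda_0\in\mathrm{KK}(F_0^A,F_0^B)$ and $\Lambda_1\in\mathrm{KK}(SF_1^A,SF_1^B)$, and the chain-map identity $\partial^B\lambda_0=\lambda_1\partial^A$ is the $\mathrm{K}$-theory shadow of the commutation $\Lambda_1\cdot\beta^A=\beta^B\cdot\Lambda_0$. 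Functoriality of the mapping cone then produces a class $x\in\mathrm{KK}(A,B)$ lifting $(\Lambda_0,\Lambda_1)$, giving the map $C(A,B)\to\mathrm{KK}(A,B)$; its well-definedness modulo the cone indeterminacy is precisely the ambiguity $M(A,B)$, and exactness of the six-term sequence in $\mathrm{KK}(-,B)$ together with the five lemma (comparing with the computation above) gives bijectivity.

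I expect the main obstacle to be this last stage rather than the homological bookkeeping: one must check that the geometric assignment is well defined with indeterminacy \emph{exactly} $M(A,B)$, that it is natural in both variables, and, most delicately, that composition of diagrams corresponds to the Kasparov product. I would handle the product by reducing it to the naturality of the connecting maps of the two defining extensions and the associativity of the Kasparov product, arranging the composite of two diagrams and the product of the corresponding classes to be computed by one and the same commuting ladder of six-term sequences.
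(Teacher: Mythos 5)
The paper does not prove this statement: it is imported verbatim from \cite{AE}, Theorem 2.9, so there is no in-paper argument to compare yours against; I can only assess your proposal on its own terms.

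Your first two paragraphs are correct and are a genuinely clean way to organize the combinatorial side. Reading the six-term sequence as the homology of the two-term free complex $\mathbb{Z}\oplus\mathbb{Z}\xrightarrow{\partial^A}\mathbb{Z}$, identifying $C(A,B)$ with chain maps (the outer vertical arrows being redundant data), and identifying $M(A,B)$ with the nullhomotopic chain maps is exactly right, and the resulting computation
$C(A,B)/M(A,B)\cong \mathrm{Hom}(\mathrm{K}_0(A),\mathrm{K}_0(B))\oplus\mathrm{Hom}(\mathrm{K}_1(A),\mathrm{K}_1(B))\oplus\mathrm{Ext}^1_{\mathbb{Z}}(\mathrm{K}_1(A),\mathrm{K}_0(B))$
checks out (e.g.\ for $\partial^A=\partial^B=(p,-p)$ both sides are $\mathbb{Z}\oplus\mathbb{Z}_p\oplus\mathbb{Z}_p$). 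Together with the UCT and the freeness of $\mathrm{K}_0(A)=\ker\partial^A$, this does give an abstract group isomorphism.

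The gap is in the third stage, and you have correctly located it but not closed it. What this paper actually uses (e.g.\ in the proof of Theorem \ref{exist}, where $\kappa\times\lambda$ is composed and identified with $\alpha([\iota_m])$) is precisely the \emph{natural}, product-compatible isomorphism, so the ``main obstacle'' you defer is the entire content of the theorem. Concretely: (i) the fill-in ambiguity for a morphism of triangles is a subgroup of $\mathrm{KK}(A,B)$ (the image of $\mathrm{KK}(F_0^A,B)\xrightarrow{-\circ\pi^A}\mathrm{KK}(A,B)$ subject to the constraint that the second square still commutes), whereas $M(A,B)$ is a subgroup of $C(A,B)$; your sentence ``its well-definedness modulo the cone indeterminacy is precisely the ambiguity $M(A,B)$'' conflates the two. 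Note that $-\circ\pi^A$ is far from zero on $\mathrm{K}_0$ (it contains classes inducing $1\mapsto\epsilon_1+\epsilon_2$ on $\mathrm{K}_0(A)=\ker\partial^A\subset\mathbb{Z}^2$), so showing that the \emph{constrained} indeterminacy vanishes, and separately that every diagram in $M(A,B)$ admits $0$ as a fill-in, are real computations, not formalities. (ii) Multiplicativity under composition/Kasparov product cannot be extracted from the unnatural UCT splitting and needs the concrete representation of $\mathrm{KK}$-classes by homomorphisms respecting the canonical ideals (which is how \cite{AE} proceeds). As it stands you have a correct proof that the two groups are abstractly isomorphic, plus a plausible but unverified programme for the statement actually being cited.
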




Next we focus on the problem of lifting KK-elements into homomorphisms of algebras.
\begin{prop}\label{scale} Let $A$ and $B$ be two generalized dimension drop
interval algebras. If $\alpha\in \mathrm{KK}(A,B)$ can be lifted to a $*$-homomorphism $\phi$ from $A$ to $M_r(B)$ for some integer $r$, and $\alpha_*([1_{A}])\leq[1_{B}]$, then $\alpha$ can be lifted to a $*$-homomorphism from $A$ to $B$. In particular, if $\alpha_*([1_{A}])=[1_{B}]$, we can lift it to a unital $*$-homomorphism.
\end{prop}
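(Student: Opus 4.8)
The plan is to realize the required homomorphism as a compression of $\phi$ by a partial isometry in $M_r(B)$ that carries the range projection $\phi(1_A)$ into a subprojection of $1_B$. First I would record the structural consequences of the hypothesis: since $\phi\colon A\to M_r(B)$ is a $*$-homomorphism, $p:=\phi(1_A)$ is a projection in $M_r(B)$ and $\phi$ is a \emph{unital} homomorphism from $A$ onto the corner $pM_r(B)p$. By functoriality of $\mathrm{K}_0$ and the canonical identification $\mathrm{K}_0(M_r(B))\cong\mathrm{K}_0(B)$, we have $[p]=\alpha_*([1_A])$, so the hypothesis gives $[p]\le[1_B]$ in $\mathrm{K}_0(B)$.

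The crux is to produce a subprojection of $1_B$ Murray--von Neumann equivalent to $p$. Here I would use that $B$, being a generalized dimension drop interval algebra (a one-dimensional Elliott--Thomsen building block), is known to have stable rank one, and hence has cancellation of projections. Consequently the order on $\mathrm{K}_0(B)$ is witnessed by honest comparison of projections, so $[p]\le[1_B]$ yields $p\precsim 1_B$ in $M_r(B)$: there is a partial isometry $v\in M_r(B)$ with $v^*v=p$ and $q:=vv^*\le 1_B$. Regarding $B$ as the top-left corner $e_{11}\otimes B$ of $M_r(B)=M_r\otimes B$, the subprojection $q$ of $1_B$ lies in $B$, and $qM_r(B)q=qBq\subseteq B$. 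This comparison step is the main obstacle: the whole argument hinges on knowing that $\mathrm{K}_0$-domination in $B$ is realized by an actual subprojection of the unit, which is exactly where cancellation (stable rank one) of the building block $B$ is needed.

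With $v$ in hand, I would define $\psi:=\mathrm{Ad}(v)\circ\phi$, that is, $\psi(a)=v\phi(a)v^*$. Because $\phi(a)=p\phi(a)p$ and $vv^*=q$, the map $\psi$ is a $*$-homomorphism from $A$ into $qM_r(B)q=qBq\subseteq B$, which is the desired lift. To check that it still induces $\alpha$, I would form the unitary
$$
W=\begin{pmatrix} v & 1-q\\ 1-p & v^*\end{pmatrix}\in M_2(M_r(B)),
$$
and verify, using $vp=v=qv$ together with $\phi(a)=p\phi(a)p$, that $W\,\mathrm{diag}(\phi(a),0)\,W^*=\mathrm{diag}(\psi(a),0)$ for all $a\in A$. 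Thus $\mathrm{diag}(\phi,0)$ and $\mathrm{diag}(\psi,0)$ are unitarily equivalent homomorphisms into $M_{2r}(B)$, hence induce the same class in $\mathrm{KK}(A,M_{2r}(B))$; under the stabilization isomorphism $\mathrm{KK}(A,M_{2r}(B))\cong\mathrm{KK}(A,B)$ this gives $[\psi]=[\phi]=\alpha$.

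Finally, for the last assertion, when $\alpha_*([1_A])=[1_B]$ we have $[p]=[1_B]$, so cancellation lets us take $v$ with $q=vv^*=1_B$; then $\psi(1_A)=1_B$ and $\psi$ is unital. I expect the comparison step of the second paragraph to be the only genuinely nontrivial point, while the preservation of the $\mathrm{KK}$-class under compression is the routine unitary-equivalence computation indicated above.
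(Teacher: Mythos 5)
Your proposal is correct and follows essentially the same route as the paper: both arguments use cancellation of projections (stable rank one) in the generalized dimension drop interval algebra $B$ to produce a partial isometry $v$ with $v^*v=\phi(1_A)$ and $vv^*$ a subprojection of $1_B$, and then compress $\phi$ by $v$ to land in $B$. Your explicit $2\times 2$ rotation verifying that the compression preserves the $\mathrm{KK}$-class is a standard detail the paper leaves implicit, so there is no substantive difference.
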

\begin{proof}
As $\phi$ is a $*$-homomorphism from $A$ to $B\otimes
\mathcal {K}(H)$, such that $[\phi]=\alpha$, since
$\alpha_*([1_{A}])\leq[1_{B}]$, there is a projection $q\in
B_n$, such that $[\phi(1_{A})]_0=[q]_0$. Because dimension drop interval algebras have cancellation of projections, there is a partial isometry $v$ in $B\otimes \mathcal{K}(H)$, such that $\phi(1_{A})=v^*v$ and $q=vv^*$. Define $\widetilde{\phi}=v\phi v^*$.
Then for any $a\in A$, one has $\widetilde{\phi}(a)=v\phi(a)v^*=vv^*v\phi(a)v^*vv^*=qv\phi(a)v^*q\in B$. So $\widetilde{\phi}$ is a homomorphism
from $A$ to $B$, and $\widetilde{\phi}$ is a lifting of $\alpha$.
\end{proof}
The following lemma is a special case of Theorem 3.8 in \cite{AE}.
\begin{lem} \label{CRCITE}
Let $A=I[m_0,m,m_1]$ and $B=I[n_0,n,n_1]$.
A commutative diagram $\lambda\in C(A,B)$
$$
\xymatrixcolsep{2pc}
\xymatrix{
{\,\,0\,\,} \ar[r]^-{}
& {\,\,\mathrm{K}_0(A)\,\,} \ar[d]_-{\lambda_{0*}} \ar[r]^-{(\pi_0\oplus\pi_1)_*}
& {\,\,\mathbb{Z}\oplus \mathbb{Z}\,\,} \ar[d]_-{\lambda_{0}} \ar[r]^-{(\frac{m}{m_0}, -\frac{m}{m_1})}
& {\,\, \mathbb{Z}\,\,} \ar[d]_-{\lambda_{1}} \ar[r]^-{\iota_*}
& {\,\,\mathrm{K}_1(A)\,\,} \ar[d]_-{\lambda_{1*}} \ar[r]^-{}
& {\,\,0\,\,}\\
{\,\,0\,\,} \ar[r]^-{}
& {\,\,\mathrm{K}_0(B)\,\,} \ar[r]_-{(\pi_0'\oplus\pi_1')_*}
& {\,\,\mathbb{Z}\oplus \mathbb{Z} \,\,} \ar[r]_-{(\frac{n}{n_0}, -\frac{n}{n_1})}
& {\,\, \mathbb{Z} \,\,} \ar[r]_-{\iota_*'}
& {\,\,\mathrm{K}_1(B)\,\,} \ar[r]^-{}
& {\,\,0\,\,}}
$$
can be lifted to a homomorphism (from $A$ to $B\otimes\mathcal{K}$) if, and only if, $\lambda_0$ has no negative entries.
\end{lem}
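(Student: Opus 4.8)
The plan is to prove the two implications separately. I will use Theorem \ref{CM} to read ``lifting the diagram $\lambda$'' as producing a $*$-homomorphism $\phi\colon A\to B\otimes\mathcal K$ inducing $\lambda$; since $\phi(1_A)$ is a finite-rank projection in $B\otimes\mathcal K$, I may assume $\phi\colon A\to M_N(B)$ for some $N$. The governing observation is that a diagram $\lambda\in C(A,B)$ is already determined by its middle vertical map $\lambda_0=\begin{pmatrix}a_{00}&a_{01}\\a_{10}&a_{11}\end{pmatrix}$: by exactness of the two rows, commutativity of the central square forces $\tfrac{m}{m_0}\lambda_1=\tfrac{n}{n_0}a_{00}-\tfrac{n}{n_1}a_{10}$ and $\tfrac{m}{m_1}\lambda_1=\tfrac{n}{n_1}a_{11}-\tfrac{n}{n_0}a_{01}$, so that $\lambda_1$ is recovered from $\lambda_0$, while $\lambda_{0*}$ and $\lambda_{1*}$ are the maps induced by $\lambda_0$ and $\lambda_1$ on the kernels and cokernels of the horizontal maps. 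Thus the whole lifting problem is controlled by $\lambda_0$, which is why a single positivity condition should be decisive.

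For necessity I would invoke Lemma \ref{homstyle} and the fiberwise standard form recorded after it: $\phi$ is described by global endpoint multiplicities $s_0,s_1\ge0$ together with continuous interior eigenvalue functions $\xi_1(t),\dots,\xi_s(t)\in[0,1]$. Evaluating at the endpoint $\epsilon\in\{0,1\}$ of $B$ yields a representation of $A$ which, after removing the $(n/n_\epsilon)$-fold amplification coming from the fiber structure of $B$, is a direct sum $\psi_\epsilon\colon A\to M_{Nn_\epsilon}$ in standard form. The entry $a_{\epsilon j}$ of $\lambda_0$ is then the multiplicity with which $\pi_j$ occurs in $\psi_\epsilon$, namely $s_j+\#\{i:\xi_i(\epsilon)=j\}$, a nonnegative integer; hence $\lambda_0$ has no negative entries.

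For sufficiency, which is the main obstacle, I would, given $\lambda$ with $\lambda_0=(a_{\epsilon j})\ge0$, build $\phi$ as a continuous field of representations of $A$ over $[0,1]$. At the endpoint $\epsilon$ I would assemble a representation containing $\pi_0$ with multiplicity $a_{\epsilon0}$ and $\pi_1$ with multiplicity $a_{\epsilon1}$ — this is precisely the step that uses $\lambda_0\ge0$, since negative multiplicities cannot be realized — padded by interior point evaluations so that it factors as $\psi_\epsilon'\otimes 1_{n/n_\epsilon}$ and all fibers acquire a common dimension. The hard part is then the interpolation: choosing continuous eigenvalue functions $\xi_i(t)$ that join the data at $t=0$ to that at $t=1$ with the prescribed endpoint values, respect the two divisibility constraints, and produce the net winding realizing $\lambda_1$ across the ideals. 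The central-square relations guarantee that the $\lambda_1$ forced by $\lambda_0$ is an integer compatible with such an interpolation, so the combinatorial system of endpoint counts and connecting paths is solvable exactly when $\lambda_0\ge0$; recomputing the endpoint multiplicities of the resulting $\phi$ returns $\lambda_0$, and by the determinacy noted above $[\phi]=\lambda$. This interpolation is the substance of Theorem 3.8 of \cite{AE}, whose general positivity hypotheses, specialized to two generalized dimension drop interval algebras, collapse to the single requirement that $\lambda_0$ have no negative entries; no scaling step is needed here, as the target is $B\otimes\mathcal K$.
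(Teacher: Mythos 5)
Your argument is essentially the paper's: the paper offers no proof of this lemma at all, simply declaring it a special case of Theorem 3.8 of \cite{AE}, and your proposal likewise delegates the decisive interpolation step of the sufficiency direction to that same theorem, while correctly sketching the surrounding mechanism (necessity via the multiplicities of irreducible representations appearing in the endpoint fibers, plus the observation that the whole diagram is determined by $\lambda_0$). The only caveats are that your write-up is therefore no more self-contained than the paper's bare citation, and that the multiplicity bookkeeping in the necessity direction should account for the $m/m_j$- and $n/n_\epsilon$-fold amplifications at the endpoints (your formula $s_j+\#\{i:\xi_i(\epsilon)=j\}$ omits these factors), though this does not affect the nonnegativity conclusion.
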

The following result is our local existence theorem for the classification.

\begin{thrm}\label{exist} Let $A$ be a generalized dimension drop interval algebra, then
there exists a finite subset $F$ of $\underline{\mathrm{K}}^+(A)$ such that, for any generalized dimension drop interval
algebra $B$, if $\alpha\in \mathrm{KK}(A,B)$ satisfies the
conditions

$(1)\, \alpha_{*}(F)\subseteq \underline{\mathrm{K}}^+(B),$

$(2)\, \alpha_{*}[1_{A}]\leq[1_{B}],$\\
then there is a $*$-homomorphism $\phi:A\rightarrow B$ which lifts $\alpha$.
\end{thrm}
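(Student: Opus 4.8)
The plan is to transport the problem into the commutative-diagram model of $\mathrm{KK}(A,B)$ furnished by Theorem \ref{CM}, reduce liftability to the entrywise sign condition of Lemma \ref{CRCITE}, and then settle the scale with Proposition \ref{scale}. Writing $A=I[m_0,m,m_1]$ and $B=I[n_0,n,n_1]$, I would first represent $\alpha$ by a diagram $\lambda\in C(A,B)$ as in Definition \ref{cmset}, so that $\alpha$ is encoded by the vertical maps $\lambda_{0*},\lambda_0,\lambda_1,\lambda_{1*}$. The decisive datum is the endpoint-multiplicity matrix $\lambda_0\colon\mathbb{Z}\oplus\mathbb{Z}\to\mathbb{Z}\oplus\mathbb{Z}$: by Lemma \ref{CRCITE}, $\alpha$ lifts to a homomorphism $A\to B\otimes\mathcal K$ precisely when the representing diagram can be chosen with $\lambda_0$ having no negative entries. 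Granting such a lift $\psi\colon A\to M_r(B)$, condition $(2)$ together with Proposition \ref{scale} compresses $\psi$ by a partial isometry to an honest homomorphism $\phi\colon A\to B$ with $[\phi]=\alpha$. Thus everything comes down to producing, from positivity on a well-chosen finite $F$, a representative of $\alpha$ with nonnegative $\lambda_0$.

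To build $F$ I would exploit the fact that, because $\mathrm{K}_0(A)=\mathbb{Z}$ has rank one, the ordinary $K_0$-map fixes $\lambda_0$ only along the rank-one sublattice $\mathrm{im}(\pi_0\oplus\pi_1)_*$, while modifying $\lambda$ by elements of $M(A,B)$ adjusts $\lambda_0$ only in the transverse, rank-one direction $\mu\circ(\tfrac{m}{m_0},-\tfrac{m}{m_1})$. Consequently $\lambda_0$ is pinned down by $\alpha$ only up to this one-parameter family, and the four endpoint multiplicities are not visible on $K_0$ alone. This is exactly where total K-theory enters: the coefficient groups $\mathrm{K}_*(A;\mathbb{Z}_d)$ carry classes on which the two endpoint evaluations $\pi_0,\pi_1$ act independently. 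I would take $F$ to consist of $[1_A]$ together with a finite family of positive classes in $\mathrm{K}_0(A;\mathbb{Z}_d)$ for a few moduli $d$ dividing a fixed multiple of $m$, chosen so that $\alpha_*$ on $F$, read off at the two endpoints of $B$, recovers the residues of the entries of $\lambda_0$. Since a positive element of $\underline{\mathrm{K}}(B)$ restricts at each endpoint to the nonnegative-rank cone of $\underline{\mathrm{K}}(\mathbb{C})$, the hypothesis $\alpha_*(F)\subseteq\underline{\mathrm{K}}^+(B)$ should force each entry to be nonnegative, i.e. a representative with $\lambda_0\geq 0$.

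The main obstacle is precisely this translation between Dadarlat--Loring positivity on $\underline{\mathrm{K}}(A)$ and the sign conditions on $\lambda_0$, and two points need care. First, since the diagram is determined only modulo $M(A,B)$, I must argue that positivity of $\alpha_*$ on $F$ is an intrinsic property of $\alpha$ and is equivalent to the \emph{existence} of some representative with $\lambda_0\geq0$, so that Lemma \ref{CRCITE} applies to that representative rather than to an arbitrary one. Second, the torsion $\mathrm{K}_1(A)=\mathbb{Z}_p$ with $p=(m/m_0,m/m_1)$ means the detecting classes see the endpoint multiplicities only modulo divisors of $p$; promoting congruence-level positivity to genuine integer nonnegativity is where the scale bound $(2)$ may again be invoked, together with the rank-one structure of the transverse freedom, to locate the correct integer representative. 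Once $\lambda_0$ has been arranged with no negative entries, the remaining steps --- invoking Lemma \ref{CRCITE} for a stable lift and Proposition \ref{scale} to descend into $B$ --- are routine.
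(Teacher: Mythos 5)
You have correctly identified the outer skeleton of the argument --- represent $\alpha$ by a diagram $\lambda\in C(A,B)$ via Theorem \ref{CM}, reduce liftability to the nonnegativity of the middle matrix $\lambda_0$ via Lemma \ref{CRCITE}, and descend from $M_r(B)$ to $B$ via Proposition \ref{scale} and hypothesis $(2)$. But the heart of the proof is the construction of $F$ and the derivation of a representative with $\lambda_0\geq 0$ from hypothesis $(1)$, and there your mechanism has a genuine gap. You propose to take $F$ inside the coefficient groups $\mathrm{K}_0(A;\mathbb{Z}_d)$ and to recover the \emph{residues} of the entries of $\lambda_0$ modulo various $d$, then ``promote congruence-level positivity to genuine integer nonnegativity'' using the scale bound. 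Congruence data modulo finitely many $d$, even combined with an upper bound on the total multiplicity coming from $(2)$, cannot determine the sign of an integer entry; and you give no actual argument for this promotion step, only the statement that it ``is where the scale bound may again be invoked.'' Moreover, reading the entries of $\lambda_0$ off by composing with the endpoint evaluations of $B$ only yields positivity of certain linear combinations of the entries, not of the entries individually, and in any case the individual entries are only defined modulo the rank-one $M(A,B)$-ambiguity you yourself point out.

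The paper's choice of $F$ sidesteps all of this: $F$ is the \emph{single} class $\mathrm{KK}(\iota_m)$ of the unital inclusion $\widetilde{I}_m\hookrightarrow A$, viewed as a positive element of $\mathrm{K}_*(A;\mathbb{Z}\oplus\mathbb{Z}_m)^+\subseteq\underline{\mathrm{K}}^+(A)$. The point is that the Dadarlat--Loring cone is by definition the image of the homomorphism semigroup $[\widetilde{I}_m, B\otimes C(S^1)\otimes\mathcal{K}]$, so hypothesis $(1)$ says the Kasparov product $\kappa\times\lambda\in \mathrm{KK}(\widetilde{I}_m,B)$ is realized by an actual homomorphism; Lemma \ref{CRCITE} then gives \emph{integer} nonnegativity of its middle matrix $\bigl(\begin{smallmatrix} am_0 & bm_1\\ cm_0 & dm_1\end{smallmatrix}\bigr)$ after correcting by an element of $M(\widetilde{I}_m,B)$, whose rows are integer multiples $(u_im,-u_im)$ of $(m,-m)$. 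The final arithmetic step you are missing is the transfer back to $A$: since $m_0\mid m$ and $m_1\mid m$, dividing the inequalities $am_0+u_1m\geq 0$, $bm_1-u_1m\geq0$, etc.\ by $m_0$ and $m_1$ produces the corrections $u_im/m_0$, $-u_im/m_1$, which are exactly of the form allowed in $M(A,B)$; hence $\alpha$ itself admits a representative with $\lambda_0\geq0$ and Lemma \ref{CRCITE} applies. Without some substitute for this realizability-of-the-composite argument, your proposal does not close.
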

\begin{proof}
Recall that $A=I[m_0,m,m_1]$. Denote by $\iota_m$ the natural embedding from $\widetilde{I}_m$ to $A$, and set $F=\{\mathrm{KK}(\iota_m)\}\subseteq \underline{\mathrm{K}}^+(A)$. Note that $\iota_m$ induces the following diagram $\kappa\in C(\widetilde{I}_m, A)$
$$\xymatrixcolsep{2pc}
\xymatrix{
{\,\,0\,\,} \ar[r]^-{}
& {\,\,\mathrm{K}_0(\widetilde{I}_m)\,\,} \ar[d]_-{1} \ar[r]^-{}
& {\,\,\mathbb{Z}\oplus \mathbb{Z}\,\,} \ar[d]_-{\fontsize{8pt}{5pt}\selectfont\left(\!\!
\begin{array}{cc}
m_0&\\[2 mm]
&m_1
\end{array}\!\!\right)} \ar[r]^-{(m,-m)}
& {\,\, \mathbb{Z}\,\,} \ar[d]_-{1} \ar[r]^-{}
& {\,\,\mathrm{K}_1(\widetilde{I}_m)\,\,} \ar[d]_-{1} \ar[r]^-{}
& {\,\,0\,\,}\\
{\,\,0\,\,} \ar[r]^-{}
& {\,\,\mathrm{K}_0(A)\,\,} \ar[r]_-{(\pi_0\oplus\pi_1)_*}
& {\,\,\mathbb{Z}\oplus \mathbb{Z} \,\,} \ar[r]_-{(\frac{m}{m_0}, -\frac{m}{m_1})}
& {\,\, \mathbb{Z} \,\,} \ar[r]_-{\iota_*}
& {\,\,\mathrm{K}_1(A)\,\,} \ar[r]^-{}
& {\,\,0\,\,}.}
$$
By Theorem \ref{CM}, there exists a commutative diagram $\lambda\in C(A,B)$
$$\xymatrixcolsep{2pc}
\xymatrix{
{\,\,0\,\,} \ar[r]^-{}
& {\,\,\mathrm{K}_0(A)\,\,} \ar[d]_-{} \ar[r]^-{(\pi_0\oplus\pi_1)_*}
& {\,\,\mathbb{Z}\oplus \mathbb{Z}\,\,} \ar[d]_-{\fontsize{8pt}{5pt}\selectfont\left(\!\!
\begin{array}{cc}
a&b\\[2 mm]
c&d
\end{array}\!\!\right)} \ar[r]^-{(\frac{m}{m_0}, -\frac{m}{m_1})}
& {\,\, \mathbb{Z}\,\,} \ar[d]_-{s} \ar[r]^-{\iota_*}
& {\,\,\mathrm{K}_1(A)\,\,} \ar[d]_-{} \ar[r]^-{}
& {\,\,0\,\,}\\
{\,\,0\,\,} \ar[r]^-{}
& {\,\,\mathrm{K}_0(B)\,\,} \ar[r]_-{(\pi_0'\oplus\pi_1')_*}
& {\,\,\mathbb{Z}\oplus \mathbb{Z} \,\,} \ar[r]_-{(\frac{n}{n_0}, -\frac{n}{n_1})}
& {\,\, \mathbb{Z} \,\,} \ar[r]_-{\iota_*'}
& {\,\,\mathrm{K}_1(B)\,\,} \ar[r]^-{}
& {\,\,0\,\,},}
$$
such that $\mathrm{KK}(\lambda)=\alpha$.

Since
$$
\alpha(F)\subset \underline{\mathrm{K}}^+(B),
$$
we have $$\alpha([\iota])\in \mathrm{KK}^+(\widetilde{I}_m,B).$$
Moreover, we also have $\alpha([\iota])=\mathrm{KK}(\kappa\times\lambda)$,
where $\kappa\times\lambda\in C(\widetilde{I}_m,B)$ is the following diagram
$$\xymatrixcolsep{2pc}
\xymatrix{
{\,\,0\,\,} \ar[r]^-{}
& {\,\,\mathrm{K}_0(\widetilde{I}_m)\,\,} \ar[d]_-{} \ar[r]^-{}
& {\,\,\mathbb{Z}\oplus \mathbb{Z}\,\,} \ar[d]_-{\fontsize{8pt}{5pt}\selectfont\left(\!\!\begin{array}{cc}
am_0&bm_1\\[1.5 mm]
cm_0&dm_1
\end{array}\!\!\right)} \ar[r]^-{(m,-m)}
& {\,\, \mathbb{Z}\,\,} \ar[d]_-{s} \ar[r]^-{}
& {\,\,\mathrm{K}_1(\widetilde{I}_m)\,\,} \ar[d]_-{} \ar[r]^-{}
& {\,\,0\,\,}\\
{\,\,0\,\,} \ar[r]^-{}
& {\,\,\mathrm{K}_0(B)\,\,} \ar[r]_-{(\pi_0'\oplus\pi_1')_*}
& {\,\,\mathbb{Z}\oplus \mathbb{Z} \,\,} \ar[r]_-{(\frac{n}{n_0}, -\frac{n}{n_1})}
& {\,\, \mathbb{Z} \,\,} \ar[r]_-{\iota_*'}
& {\,\,\mathrm{K}_1(B)\,\,} \ar[r]^-{}
& {\,\,0\,\,}.}
$$
Note that any homomorphism between generalized dimension drop algebras is homotopic to a homomorphism which induces a commutative diagram, combining this fact with Theorem \ref{CM} and Lemma \ref{CRCITE}, there exists a $\lambda_\mu\in M(\widetilde{I}_m,B)$ such that $\mu_0+\fontsize{8pt}{5pt}\selectfont\left(\!\!\begin{array}{cc}
am_0&bm_1\\[1.5 mm]
cm_0&dm_1
\end{array}\!\!\right)$ has no negative entries.

That is, there exist $u_1,\,u_2\in \mathbb{Z}$ such that
$$
am_0+u_1m\geq0,\quad
bm_1-u_1m\geq0,
$$
$$
cm_0+u_2m\geq0,\quad
dm_1-u_2m\geq0.
$$
Then we have
$$
a+u_1m/m_0\geq0,\quad
b-u_1m/m_1\geq0,
$$
$$
c+u_2m/m_0\geq0,\quad
d-u_2m/m_1\geq0.
$$
So from Lemma \ref{CRCITE}, we know that $\alpha$ can be lifted to a homomorphism from $A$ to $M_r(B)$ for some integer $r$, then by Proposition \ref{scale}, $\alpha$ can be lifted to a $*$-homomorphism from $A$ to $B$.

\end{proof}

\begin{cor}\label{exists} A similar statement is true for direct sums
of generalized dimension drop interval algebras.
\end{cor}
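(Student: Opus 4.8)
The plan is to reduce everything to the single-summand case of Theorem \ref{exist} by exploiting the fact that both the $\mathrm{KK}$-groups and the Dadarlat--Loring positive cone split over finite direct sums, and then to reassemble the resulting partial homomorphisms orthogonally inside $B$, using the scale condition to guarantee that the pieces fit. Write $A=\bigoplus_{i=1}^p A_i$ and $B=\bigoplus_{j=1}^q B_j$ with each $A_i,B_j$ a generalized dimension drop interval algebra, and let $\iota_i:A_i\to A$ and $q_j:B\to B_j$ be the canonical inclusions and coordinate projections. For each $i$ let $F_i\subset\underline{\mathrm{K}}^+(A_i)$ be the finite set furnished by Theorem \ref{exist}, and set $F=\bigcup_i\iota_{i*}(F_i)\subset\underline{\mathrm{K}}^+(A)$.

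Given $\alpha\in\mathrm{KK}(A,B)$ satisfying (1) and (2), I would first record the two direct-sum decompositions $\mathrm{KK}(A,B)\cong\bigoplus_{i,j}\mathrm{KK}(A_i,B_j)$ and $\underline{\mathrm{K}}^+(B)\cong\bigoplus_j\underline{\mathrm{K}}^+(B_j)$; the latter holds because any homomorphism into $(\bigoplus_j B_j)\otimes C(S^1)\otimes\mathcal{K}$ splits as a family of homomorphisms into the factors, so representability by an honest map is detected coordinatewise. Writing $\alpha_{ij}=q_{j*}\circ\alpha\circ\iota_{i*}\in\mathrm{KK}(A_i,B_j)$, condition (1) applied to $\iota_{i*}(F_i)$ yields $(\alpha_{ij})_*(F_i)\subseteq\underline{\mathrm{K}}^+(B_j)$ for every $i,j$, which is exactly hypothesis (1) of Theorem \ref{exist} for each $\alpha_{ij}$.

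For the scale hypothesis, note that $[1_A]=\sum_i\iota_{i*}[1_{A_i}]$, so condition (2), read in $\mathrm{K}_0(B_j)\cong\mathbb{Z}$, becomes $\sum_i(\alpha_{ij})_*[1_{A_i}]\leq[1_{B_j}]$ for each $j$. Since $[1_{A_i}]$ is the $\mathrm{K}_0$-component of the class $\mathrm{KK}(\iota_m)\in F_i$, condition (1) forces $(\alpha_{ij})_*[1_{A_i}]\geq0$, and hence each individual term satisfies $(\alpha_{ij})_*[1_{A_i}]\leq[1_{B_j}]$. Thus both hypotheses of Theorem \ref{exist} hold for every $\alpha_{ij}$, and I obtain $*$-homomorphisms $\phi_{ij}:A_i\to B_j$ with $[\phi_{ij}]=\alpha_{ij}$.

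It remains, for each fixed $j$, to assemble $\phi_{1j},\dots,\phi_{pj}$ into a single homomorphism $\phi_j:A\to B_j$ lifting $\bigoplus_i\alpha_{ij}$. Because $\sum_i[\phi_{ij}(1_{A_i})]=\sum_i(\alpha_{ij})_*[1_{A_i}]\leq[1_{B_j}]$ and $B_j$ has cancellation of projections, I can choose mutually orthogonal projections $e_{ij}\in B_j$ with $[e_{ij}]=[\phi_{ij}(1_{A_i})]$ and $\sum_i e_{ij}\leq 1_{B_j}$, and then conjugate each $\phi_{ij}$ by a partial isometry, exactly as in the proof of Proposition \ref{scale}, so that its range lies in $e_{ij}B_j e_{ij}$ without changing its $\mathrm{KK}$-class. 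The adjusted maps have orthogonal ranges, so $\phi_j(a_1,\dots,a_p)=\sum_i\phi_{ij}(a_i)$ is a well-defined $*$-homomorphism, and $\phi=\bigoplus_j\phi_j:A\to B$ lifts $\alpha$. The one genuinely non-formal point is this orthogonal-assembly step: everything else is bookkeeping through the direct-sum splittings, but here the inequality (2) is precisely what creates the room for the $p$ pieces to sit orthogonally inside each $B_j$, while cancellation of projections is what permits the partial-isometry adjustment.
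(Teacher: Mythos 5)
Your proof is correct, and since the paper states this corollary without any argument, your reduction to Theorem~\ref{exist} via the direct-sum splittings of $\mathrm{KK}(A,B)$ and of $\underline{\mathrm{K}}^+(B)$, followed by the orthogonal-assembly step using cancellation of projections (as in Proposition~\ref{scale}), is precisely the routine argument the authors leave implicit. The one genuinely non-formal point — that positivity of $(\alpha_{ij})_*(\mathrm{KK}(\iota_{m_i}))$ forces $(\alpha_{ij})_*[1_{A_i}]\ge 0$ in $\mathrm{K}_0(B_j)\cong\mathbb{Z}$, so that the global scale inequality descends to each summand — is correctly identified and justified in your write-up.
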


\begin{prop}[\cite{DG}, Proposition 4.13]\label{dg finite}
Let $A$ be a $C^*$-algebra in the class $\mathcal{N}$ of \cite{RS}. If the group $\mathrm{K}_*(A)$ is finitely
generated, then $\underline{\mathrm{K}}(A)$ is finitely generated as $\Lambda$-module.
(In other words, there are finitely many elements $x_1,\cdots,x_r\in\underline{\mathrm{K}}(A)$ such that for any
$x\in\underline{\mathrm{K}}(A)$ there exist $\lambda_i\in \Lambda$ and $k_i\in \mathbb{Z}$ such that $x=\sum_{i=1}^r k_i \lambda_i(x_i)$.)
\end{prop}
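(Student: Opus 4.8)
The plan is to reduce the computation, via KK-equivalence, to a finite list of elementary model algebras, to compute $\underline{\mathrm{K}}$ for each, and then to reassemble. First I would apply the structure theorem for finitely generated abelian groups to write the $\mathbb{Z}/2$-graded group $\mathrm{K}_*(A)=\mathrm{K}_0(A)\oplus\mathrm{K}_1(A)$ as a finite direct sum of cyclic summands $\mathbb{Z}$ and $\mathbb{Z}_q$ (with $q$ a prime power), each located in degree $0$ or $1$. Since $A$ lies in the bootstrap class $\mathcal{N}$ and its K-theory is finitely generated, the UCT lets me realize this graded group by a finite direct sum of models and conclude that $A$ is KK-equivalent to $\bigoplus_j D_j$, where each $D_j$ carries a single cyclic summand in a single degree (for instance $\mathbb{C}$ and its suspension for the free summands, and a Cuntz algebra $O_{q+1}$ or its suspension for each $\mathbb{Z}_q$ summand).

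Next I would use that $\underline{\mathrm{K}}(-)$ is an additive, KK-invariant functor with values in $\Lambda$-modules. Indeed, by definition $\mathrm{K}_*(-;\mathbb{Z}_p)=\mathrm{KK}(\widetilde{I}_p,-\otimes C(S^1))$ depends only on the KK-class of its argument and sends finite direct sums to direct sums, while each Bockstein operation is a natural transformation realized by a Kasparov product in the first variable; hence a KK-equivalence induces a $\Lambda$-module isomorphism. This gives $\underline{\mathrm{K}}(A)\cong\bigoplus_j\underline{\mathrm{K}}(D_j)$ as $\Lambda$-modules, and since a finite direct sum of finitely generated modules is finitely generated, it suffices to handle each $D_j$ separately.

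For each model I would read off $\underline{\mathrm{K}}(D_j)$ from the coefficient exact sequences
$$0\to \mathrm{K}_i(D_j)\otimes\mathbb{Z}_p\to \mathrm{K}_i(D_j;\mathbb{Z}_p)\to \mathrm{Tor}(\mathrm{K}_{i-1}(D_j),\mathbb{Z}_p)\to 0.$$
For a free model the $\mathrm{Tor}$ terms vanish and the reduction operations $\rho_p$ carry the canonical generator $g$ of $\mathrm{K}_*(D_j)$ in the $p=0$ summand onto a generator of every $\mathrm{K}_*(D_j;\mathbb{Z}_p)\cong\mathbb{Z}_p$, so a single element generates over $\Lambda$. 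For a torsion model $\mathrm{K}_*(D_j)=\mathbb{Z}_q$, the groups $\mathrm{K}_*(D_j;\mathbb{Z}_p)$ are cyclic of order $\gcd(p,q)$ in both degrees; here I would take two canonical generators, namely the generator $g$ of $\mathrm{K}_*(D_j)$ in the $p=0$ summand and a generator $h$ of the single coefficient group $\mathrm{K}_*(D_j;\mathbb{Z}_q)$ carrying the $\mathrm{Tor}$ part, and check that the reductions $\rho_p(g)$ together with the coefficient-change operations applied to $h$ exhaust all of the $\mathrm{K}_*(D_j;\mathbb{Z}_p)$. Collecting the finitely many generators produced for all $j$ then yields the required $x_1,\dots,x_r$.

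The main obstacle is this last verification for the torsion models: one must confirm that the fixed elements $g$ and $h$ reach, under the operations of $\Lambda$, generators of both the reduction part and the $\mathrm{Tor}$ part of $\mathrm{K}_*(D_j;\mathbb{Z}_p)$ simultaneously for all of the infinitely many $p$. The point is that $\mathbb{Z}_q\otimes\mathbb{Z}_p$ and $\mathrm{Tor}(\mathbb{Z}_q,\mathbb{Z}_p)$ are, respectively, the mod-$p$ reduction and the $p$-torsion subgroup of $\mathbb{Z}_q$, each cyclic of order $\gcd(p,q)$, and the compatibility relations among the Bockstein operations (reduction $\rho$, the Bockstein $\beta$, and the coefficient transitions $\mathbb{Z}_p\leftrightarrow\mathbb{Z}_{kp}$) are designed precisely so that these lie in the $\Lambda$-orbit of $g$ and $h$; checking these relations explicitly against the coefficient sequences, uniformly in $p$, is the technical heart of the argument.
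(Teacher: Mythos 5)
The paper gives no proof of this proposition: it is imported verbatim from \cite{DG}, Proposition 4.13, so the only comparison available is with the standard argument given there, which is essentially what you reconstruct. Your proposal is correct and follows that route --- use the UCT and additivity of $\underline{\mathrm{K}}$ under KK-equivalence to reduce to models carrying a single cyclic summand of $\mathrm{K}_*$ in a single degree (which is exactly what makes the coefficient sequences degenerate, so no extension problem arises), and then verify that each model's total K-theory is generated over $\Lambda$ by one element in the free case and two in the torsion case, the latter via the compatibility of the reduction and coefficient-transition operations with $\otimes\,\mathbb{Z}_p$ and $\mathrm{Tor}(\cdot,\mathbb{Z}_p)$ uniformly in $p$, as you indicate.
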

The following theorem is a corollary of the universal coefficient theorem list in \cite{DG}.
\begin{thrm}\label{UCT}
Let $A$ and $B$ be ${\mathrm C}^*$-algebras. Suppose that $A\in \mathcal{N}$ (where $\mathcal{N}$ is
the ``bootstrap'' category defined in \cite{RS}), $\mathrm{K}_*(A)$ is finitely generated, and $B$ is $\sigma$-unital.
Then the natural map
$$\Gamma:\,\mathrm{KK}(A,B)\to {\bf Hom}_\Lambda(\underline{\mathrm{K}}(A),\underline{\mathrm{K}}(B))$$ is a group isomorphism.
\end{thrm}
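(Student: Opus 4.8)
The plan is to deduce this from the universal multicoefficient short exact sequence of Dadarlat--Loring recorded in \cite{DG}, and to show that the finite-generation hypothesis forces the obstruction term to vanish.

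First I would invoke the general form of the universal coefficient theorem for total K-theory: for $A\in\mathcal{N}$ separable and $B$ $\sigma$-unital, there is a natural short exact sequence
$$
0\to \mathrm{Pext}_{\mathbb{Z}}(\mathrm{K}_*(A),\mathrm{K}_*(B))\to \mathrm{KK}(A,B)\xrightarrow{\Gamma}{\bf Hom}_\Lambda(\underline{\mathrm{K}}(A),\underline{\mathrm{K}}(B))\to 0,
$$
where $\mathrm{Pext}_{\mathbb{Z}}$ denotes the subgroup of $\mathrm{Ext}^1_{\mathbb{Z}}$ classifying pure extensions, and $\Gamma$ is exactly the natural map sending a $\mathrm{KK}$-class to the induced $\Lambda$-module homomorphism on total K-theory. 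The surjectivity of $\Gamma$ and the identification of its kernel with the Pext term are precisely the content of the cited universal coefficient list, so I would simply quote them here rather than reprove the homological-algebra machinery behind the sequence.

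The substantive step is then to show that the kernel vanishes under the hypothesis that $\mathrm{K}_*(A)$ is finitely generated. I would argue that every pure extension of a finitely generated abelian group splits: writing $\mathrm{K}_*(A)$ as a finite direct sum of cyclic groups, it suffices to treat each summand. For a free summand $\mathbb{Z}$ the Ext group already vanishes, while for a torsion summand $\mathbb{Z}_n$ an extension $0\to G\to E\to \mathbb{Z}_n\to 0$ with class $[g_0]\in G/nG$ is pure exactly when $nE\cap G=nG$, and this condition says precisely that $g_0\in nG$, i.e.\ that the class is trivial. Hence $\mathrm{Pext}_{\mathbb{Z}}(\mathrm{K}_*(A),\mathrm{K}_*(B))=0$, and $\Gamma$ is injective.

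Combining injectivity with the surjectivity supplied by the exact sequence, $\Gamma$ is a group isomorphism, and naturality is inherited from the naturality of the universal coefficient sequence. The only point requiring care, if $B$ is merely $\sigma$-unital rather than separable, is to pass from the separable case by writing $B$ as an inductive limit of separable $\sigma$-unital subalgebras and using continuity of both $\mathrm{KK}(A,-)$ and $\underline{\mathrm{K}}(-)$ in the second variable, together with finite generation of $\underline{\mathrm{K}}(A)$ as a $\Lambda$-module (Proposition \ref{dg finite}). I expect this reduction, rather than the Pext computation, to be the main technical obstacle.
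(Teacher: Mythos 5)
Your proposal is correct and follows essentially the same route as the paper, which simply cites this as a corollary of the universal (multi)coefficient theorem recorded in \cite{DG}: the kernel of $\Gamma$ in that exact sequence is $\mathrm{Pext}_{\mathbb{Z}}(\mathrm{K}_*(A),\mathrm{K}_*(B))$, and your observation that pure extensions of finitely generated abelian groups split is exactly the standard reason this term vanishes. The $\sigma$-unitality of $B$ is already the hypothesis under which the universal multicoefficient theorem is stated, so no separate reduction from the separable case is needed.
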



\section{uniqueness results and the classification}
In this section, we establish some uniqueness results, in particular Theorem \ref{unithm0}, which is crucial for real rank zero limits built on generalized dimension drop interval algebras, and then obtain the asserted isomorphism theorem.

\begin{definition}
A C*-algebra $A$ is said to have property (H) if for any $\epsilon>0$ and any finite subset $F\subset A$, there exist a natural number $r\in \mathbb{N}$, a $*$-homomorphism $\tau: A\rightarrow M_{r-1}(A)$, and a $*$-homomorphism $\mu: A\rightarrow M_r(A)$ with finite dimensional image, such that $\|a\oplus \tau(a)-\mu(a)\|<\epsilon$ for all $a\in F$.

\end{definition}

\begin{thrm}\label{prouni}
Let $A$ be a generalized dimension drop interval
algebra, and let B be a unital C*-algebra. Given two homomorphisms
$\phi, \psi: M_r(A)\rightarrow B$, suppose that $[\phi]=[\psi]$ in $\mathrm{KK}(A,
B)$. Then for any finite subset $F\subset M_r(A)$ and any
$\varepsilon>0$, there exists a natural number $k$, a $*$-homomorphism
$\eta: M_r(A)\rightarrow M_{k}(B)$ with finite dimensional range, and a unitary $v\in M_{k+1}(B)$
such that
$$\|v(\phi(f)\oplus\eta(f))v^*-\psi(f)\oplus\eta(f)\|<\varepsilon$$ for all $f\in F$.
\end{thrm}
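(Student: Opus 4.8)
The statement is a stable uniqueness theorem, so the plan is to extract from the $\mathrm{KK}$-equality exactly the homotopy-theoretic information it carries and then realize it as an honest approximate unitary equivalence after stabilizing by a finite-dimensional homomorphism. First I would record the $K$-theoretic content of the hypothesis. Since $M_r(A)$ is again a generalized dimension drop interval algebra (by the remark following Definition \ref{JSD}), it is separable, nuclear, lies in the bootstrap class $\mathcal{N}$, and has finitely generated $K$-theory; hence by Proposition \ref{dg finite} and Theorem \ref{UCT} the condition $[\phi]=[\psi]$ in $\mathrm{KK}(M_r(A),B)$ is equivalent to the equality $\underline{\phi}=\underline{\psi}$ of the induced $\Lambda$-module maps on total $K$-theory. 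This is the only algebraic input available, and the whole problem is to convert it into geometry.

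Second, I would invoke a stable uniqueness engine for the bootstrap class (in the spirit of Dadarlat--Eilers and Lin): for a sufficiently full homomorphism $\sigma\colon M_r(A)\to M_\ell(B)$ and a large enough integer $n$, the equality $[\phi]=[\psi]$ forces $\phi\oplus\sigma^{(n)}\sim_{F,\,\varepsilon/2}\psi\oplus\sigma^{(n)}$, implemented by a unitary over $B$. The point of this step is that once enough copies of a full homomorphism are added, the vanishing of the $\mathrm{KK}$-difference is precisely the obstruction to an approximate unitary equivalence, and it now vanishes. It remains to arrange that the added homomorphism have finite-dimensional range, which is what the conclusion demands of $\eta$.

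Third, I would use property (H) to produce such a finite-dimensional $\eta$. By Lemma \ref{homstyle} a homomorphism of a dimension drop algebra is, up to a unitary, a direct sum of point evaluations together with a part factoring through $M_m(C[0,1])$; thus a homomorphism built from the endpoint representations $\pi_0,\pi_1$ and point evaluations at a fine net of interior points, each with large multiplicity, is finite-dimensional and, composed with a full unital map into $M_\ell(B)$, is full. Property (H), applied to $M_r(A)$, is exactly the device that lets me trade the identity-type stabilization implicit in the engine for such a finite-dimensional homomorphism: it furnishes $\tau$ and a finite-dimensional-range $\mu$ with $\mathrm{id}\oplus\tau\approx\mu$ on the relevant finite set, so that adding $\sigma^{(n)}$ may be replaced, up to an error controlled by $\varepsilon$, by adding a single finite-dimensional homomorphism $\eta$ common to $\phi$ and $\psi$. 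Absorbing all the approximations then yields a $*$-homomorphism $\eta\colon M_r(A)\to M_k(B)$ of finite-dimensional range and a unitary $v\in M_{k+1}(B)$ with $\|v(\phi(f)\oplus\eta(f))v^*-(\psi(f)\oplus\eta(f))\|<\varepsilon$ for all $f\in F$.

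The main obstacle is the reconciliation in the last step: the homomorphism one adds must simultaneously have finite-dimensional range (as required of $\eta$) and be full enough to carry the $\mathrm{KK}$-agreement to an approximate unitary equivalence. Fullness together with finite-dimensionality is a genuine tension for dimension drop algebras, since the interior point evaluations all share a single $\mathrm{KK}$-class while the endpoint representations are of lower dimension; guaranteeing that a finite-dimensional model is absorbing, and that the spectral data of the two sides can be matched to within $\varepsilon$ once the added high-multiplicity point evaluations fill in every subinterval, is precisely the content that property (H) and the spectral-matching estimates built on Lemma \ref{homstyle} are there to supply. Everything else is a routine bookkeeping of error terms.
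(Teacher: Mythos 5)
Your overall architecture is the right one --- reduce the $\mathrm{KK}$-hypothesis to an absorption/stabilization statement and use property (H) to force the absorbing summand to have finite-dimensional range --- and in that sense it parallels the paper, which simply quotes Lemma 1.4 of \cite{D} (after observing that for these algebras $\mathrm{KK}$-equality of homomorphisms coincides with homotopy). But there is a genuine gap: you \emph{assume} that $M_r(A)$ has property (H), whereas establishing property (H) for \emph{generalized} (asymmetric) dimension drop interval algebras $I[m_0,m,m_1]$ with $m_0\neq m_1$ is precisely the nontrivial content of the paper's proof. The paper does this by constructing a real rank zero inductive limit $D=\varinjlim (M_{k_i}(A),\phi_i)$ whose connecting maps are built from the canonical maps $id$ and $\overline{id}$ (the endpoint-switching map is needed exactly because the two endpoint fibres $M_{m_0}$ and $M_{m_1}$ are different) together with finite-dimensional pieces, arranged so that the connecting maps kill $\mathrm{K}_1$; semiprojectivity of $A$ and the Dadarlat--Loring argument of \cite{DL3} then show that the inclusion $A\hookrightarrow D$, which visibly has the form $a\mapsto a\oplus\tau(a)$, is approximated by finite-dimensional-range homomorphisms that can be pulled back to a finite stage. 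None of this is supplied or replaced by anything in your sketch; for the classical algebras $I_p$ property (H) is in the literature, but for the asymmetric case it must be proved.

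A secondary error: you assert that a direct sum of $\pi_0$, $\pi_1$ and interior point evaluations at a fine net, composed with a unital embedding, ``is full.'' It is not --- any homomorphism with finite-dimensional range kills every function vanishing at the finitely many evaluation points, so it cannot generate $M_\ell(B)$ as an ideal on all nonzero elements. What such a map is, is \emph{approximately} close (on a finite set, to within the weak variation) to a full map; the device that legitimizes trading the genuinely full stabilizer $\sigma^{(n)}$ (or the identity summands appearing in the discretized homotopy) for a finite-dimensional-range $\eta$ is again property (H). So your ``tension'' paragraph correctly locates the difficulty, but the proposal resolves it only by citing the very property whose verification is the substance of the theorem.
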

\begin{proof} Since the KK-type of homomorphisms is the same as the homotopy type of them, the statement follows from Lemma 1.4 of \cite{D}, provided one shows that a generalized dimension drop interval algebra $A$ also has the property (H) (and so also its matricial stabilization does). This is seen as follows.
In the notation of \cite{GL}, using canonical homomorphisms $id$ and $\overline{id}$ (corresponding to identity eigenvalue map and the map switching endpoints), one can construct a real rank zero inductive limit system $D=\lim\limits_{\longrightarrow}(M_{k_i}(A), \phi_i)$, where $\phi_i$ is given in terms of $id, \overline{id}$ and certain homomorphism with finite dimensional image,
such that the connecting maps vanish on $\mathrm{K}_1$. On the other hand, a generalized dimension drop interval algebra is semiprojective (see \cite{EiLP}), and so the argument of stable relations works, and then mimic the proof of Theorem 1.4 of \cite{DL3}, one gets that the inclusion of $A$ into the limit $D$
can be approximated arbitrarily well by a homomorphism $\mu$ with finite
dimensional range. Then applying a standard perturbation argument, for any $\epsilon>0$ and any $F\subset A$, one can pull $\mu$ back to certain finite steps, within $\epsilon$ on $F$, say $\mu_i$, and the inclusion map of $A$ into $D$ obviously has the form $a\oplus \tau$ for some homomorphism $\tau$.
 \end{proof}

\begin{lem}\label{firsthop}
Let $A$ be a generalized dimension drop interval
algebra, and consider the matrix algebra (tensor product) $M_r(A)$. For any finite
subset $F\subset M_r(A)$, for any $\varepsilon>0$, there exist a finite subset $E\subset M_r(A)$ and $\delta>0$,
such that whenever B is a unital C*-algebra and whenever $\Phi\in Map(M_r(A), B[0,1])_1$ is $\delta$-multiplicative
on E, there is a natural number $L$, a unital homomorphism $\lambda: M_r(A)\rightarrow M_L(B)$ with finite dimensional range
and a unitary $u\in M_{L+1}(B)$ such that $$\|u\Phi_0(f)\oplus\lambda(f)u^*-\Phi_1(f)\oplus\lambda(f)\|<\varepsilon$$
 for all $f\in F$.
\end{lem}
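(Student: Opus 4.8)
The plan is to combine three ingredients: the semiprojectivity of $M_r(A)$, the homotopy invariance of $\mathrm{KK}$, and the stable uniqueness theorem already established in Theorem \ref{prouni}. Throughout, $\Phi_0$ and $\Phi_1$ denote the evaluations $\Phi_t(f)=\Phi(f)(t)$ at the endpoints $0,1\in[0,1]$, so that $\Phi_0,\Phi_1\colon M_r(A)\to B$ are the two $\delta$-multiplicative maps sitting at the ends of the path $\Phi$. The guiding idea is that a genuine homomorphism into $C([0,1],B)=B[0,1]$ is exactly a homotopy between its two endpoint homomorphisms, and homotopic homomorphisms carry the same $\mathrm{KK}$-class; Theorem \ref{prouni} then upgrades equality of $\mathrm{KK}$-classes to stable approximate unitary equivalence.

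First I would replace the $\delta$-multiplicative map by an honest homomorphism. Since $A=I[m_0,m,m_1]$ is semiprojective (as noted in the proof of Theorem \ref{prouni}), so is its matrix stabilization $M_r(A)$, and hence the defining relations of $M_r(A)$ are weakly stable. By the standard sequence-algebra argument --- if the claim failed there would be maps $\Phi^{(k)}$ into $B_k[0,1]$ that are $1/k$-multiplicative on finite sets exhausting $M_r(A)$ yet stay $\varepsilon/3$-far from every homomorphism on $F$, and these would assemble to a genuine unital homomorphism $M_r(A)\to \prod_k B_k[0,1]/\bigoplus_k B_k[0,1]$ which, by semiprojectivity, lifts on a tail, a contradiction --- one obtains a finite set $E$ and $\delta>0$, depending only on $F$ and $\varepsilon$ (not on $B$), such that every $\delta$-multiplicative unital $\Phi$ on $E$ satisfies $\|\Phi_i(f)-\widetilde{\Phi}_i(f)\|<\varepsilon/3$ for $i=0,1$ and all $f\in F$, for some genuine unital homomorphism $\widetilde{\Phi}\colon M_r(A)\to B[0,1]$.

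Next, the path $t\mapsto\widetilde{\Phi}_t$ is a homotopy between the unital homomorphisms $\widetilde{\Phi}_0$ and $\widetilde{\Phi}_1$, so by homotopy invariance of $\mathrm{KK}$ we have $[\widetilde{\Phi}_0]=[\widetilde{\Phi}_1]$ in $\mathrm{KK}(M_r(A),B)$. Applying Theorem \ref{prouni} to this pair, with the finite set $F$ and tolerance $\varepsilon/3$, produces a natural number $L$, a homomorphism $\lambda\colon M_r(A)\to M_L(B)$ with finite-dimensional range (which we may take unital, since $\widetilde{\Phi}_0,\widetilde{\Phi}_1$ are unital, enlarging $\lambda$ by a unital finite-dimensional piece on both sides and conjugating by $u\oplus 1$ if necessary), and a unitary $u\in M_{L+1}(B)$ with $\|u(\widetilde{\Phi}_0(f)\oplus\lambda(f))u^*-\widetilde{\Phi}_1(f)\oplus\lambda(f)\|<\varepsilon/3$ for all $f\in F$. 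Combining this with the two endpoint estimates from the previous step via the triangle inequality yields $\|u(\Phi_0(f)\oplus\lambda(f))u^*-\Phi_1(f)\oplus\lambda(f)\|<\varepsilon$ on $F$, which is the assertion.

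The main obstacle is the first step: making the passage from $\delta$-multiplicative maps to genuine homomorphisms uniform in $B$. The content there is precisely the weak stability of the relations presenting $M_r(A)$, which is why semiprojectivity (rather than mere homotopy bookkeeping) is essential; one must also check that unitality can be preserved under the perturbation, which follows from the unital form of semiprojectivity. Everything after that is formal: homotopy invariance of $\mathrm{KK}$ and a direct invocation of Theorem \ref{prouni}.
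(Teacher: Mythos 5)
Your argument is correct in substance but takes a genuinely different route from the paper's. The paper's proof is a one-step discretization: since $M_r(A)$ inherits property (H), one sets $\phi_j=\Phi_{j/m}$ for $m$ large enough that consecutive maps are close on the relevant finite set, and applies Lemma 1.4 of \cite{D} directly to this chain of almost-multiplicative maps; no perturbation to genuine homomorphisms is needed. You instead use the semiprojectivity of $M_r(A)\cong I[rm_0,rm,rm_1]$ to replace $\Phi$ by an honest homomorphism into $B[0,1]$ (your sequence-algebra argument does correctly deliver $E$ and $\delta$ independent of $B$, and a projection within distance less than $1$ of the unit is the unit, so unitality of $\widetilde{\Phi}$ is automatic), read off $[\widetilde{\Phi}_0]=[\widetilde{\Phi}_1]$ from homotopy invariance of $\mathrm{KK}$, and then quote Theorem \ref{prouni}. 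Both reductions ultimately rest on Lemma 1.4 of \cite{D} and property (H) (the latter enters your proof through Theorem \ref{prouni}); yours isolates the stability-of-relations content and reuses the exact-homomorphism uniqueness theorem at the cost of an extra perturbation layer, while the paper's is shorter and works directly with the almost-multiplicative data.

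One loose end: your parenthetical fix for the unitality of $\lambda$ does not work as written. If the $\eta$ supplied by Theorem \ref{prouni} satisfies $\eta(1)=q$ with $q\neq 1_{M_k(B)}$, then $\eta\oplus(\text{unital finite-dimensional piece})$ is still not unital, since its value at $1$ is $q\oplus 1$; to repair it one would need a unital finite-dimensional representation of $M_r(A)$ into the corner $(1_{M_k(B)}-q)M_k(B)(1_{M_k(B)}-q)$, which need not exist for general $B$. The correct remedy is to observe that for generalized dimension drop interval algebras property (H) can be realized with $\tau$ and $\mu$ unital, so that the finite-dimensional map produced by Lemma 1.4 of \cite{D} --- and hence the $\eta$ of Theorem \ref{prouni} when $\phi$ and $\psi$ are unital --- can itself be taken unital. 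This is a fixable technicality rather than a flaw in the strategy, but as stated your reduction does not quite deliver the unital $\lambda$ required by the conclusion.
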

\begin{proof} Since $A$ has property (H), so does $M_r(A)$. Then
apply Lemma 1.4 of \cite{D} to the sequence $\phi_j=\Phi_{j/m}, 0\leq j\leq m$, where $m$ is large enough
that $\|\phi_{j+1}(f)-\phi_j(f)\|<\epsilon$
for all $f\in F$ and $0\leq j\leq m$.
\end{proof}
 Lemma \ref{simcase1} and Lemma \ref{simcase2} are minor generalization of Lemma 6.3 and Lemma 6.4 of \cite{DG} with the same proof.
\begin{lem}\label{simcase1}
  Let $A$ be a generalized dimension drop interval algebra, $\lambda,\lambda': M_r\oplus M_s\rightarrow M_k(A)$ be two unital homomorphisms inducing the same map on $K_0$. Then there is a unitary $u\in M_k(A)$ such that $u\lambda u^*=\lambda'$.
\end{lem}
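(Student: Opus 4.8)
The plan is to reduce the statement to the classification of projections in $M_k(A)$ by $\mathrm{K}_0$ and then to build the intertwining unitary by hand out of matrix units. Write $\{e_{ij}\}_{1\le i,j\le r}$ and $\{f_{ij}\}_{1\le i,j\le s}$ for the standard matrix units of $M_r$ and $M_s$, so that a unital homomorphism out of $M_r\oplus M_s$ is the same thing as a pair of mutually orthogonal systems of matrix units in $M_k(A)$ whose diagonal projections together sum to $1$. Set $p_i=\lambda(e_{ii})$, $q_j=\lambda(f_{jj})$ and $p_i'=\lambda'(e_{ii})$, $q_j'=\lambda'(f_{jj})$; since $\lambda,\lambda'$ are unital we have $\sum_i p_i+\sum_j q_j=1=\sum_i p_i'+\sum_j q_j'$, and within each homomorphism all the $p_i$ (resp. $q_j$) are Murray--von Neumann equivalent through the off-diagonal images $\lambda(e_{i1})$ (resp. $\lambda(f_{j1})$), and likewise for the primed family.

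The key step is K-theoretic. Because $M_k(A)\cong I[km_0,km,km_1]$ is again a generalized dimension drop interval algebra, one has $\mathrm{K}_0(M_k(A))\cong\mathbb{Z}$ (by the Remark following Definition \ref{irrep}) and, as recorded in the proof of Proposition \ref{scale}, such algebras have cancellation of projections. The hypothesis that $\lambda$ and $\lambda'$ induce the same map on $\mathrm{K}_0$ of the domain says exactly that $[\,\lambda(e_{11})\,]=[\,\lambda'(e_{11})\,]$ and $[\,\lambda(f_{11})\,]=[\,\lambda'(f_{11})\,]$ in $\mathrm{K}_0(M_k(A))$, since $[e_{11}]$ and $[f_{11}]$ generate $\mathrm{K}_0(M_r\oplus M_s)$. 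Cancellation then upgrades these equalities of $\mathrm{K}_0$-classes to honest Murray--von Neumann equivalences $p_1\sim p_1'$ and $q_1\sim q_1'$ inside $M_k(A)$; I would fix partial isometries $v,w\in M_k(A)$ with $v^*v=p_1,\ vv^*=p_1'$ and $w^*w=q_1,\ ww^*=q_1'$.

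Finally I would assemble the unitary
$$
u=\sum_{i=1}^r \lambda'(e_{i1})\,v\,\lambda(e_{1i})+\sum_{j=1}^s \lambda'(f_{j1})\,w\,\lambda(f_{1j}),
$$
each summand being a partial isometry carrying $p_i$ onto $p_i'$ (resp. $q_j$ onto $q_j'$), as one checks from $v^*p_1'v=p_1$ and $e_{i1}e_{11}e_{1i}=e_{ii}$. A short computation using $e_{1i}e_{i'1}=\delta_{ii'}e_{11}$, $v\,p_1\,v^*=p_1'$, and the orthogonality $\lambda(e_{1i})\lambda(f_{j1})=0$ shows that the cross terms vanish and that $uu^*=\sum_i p_i'+\sum_j q_j'=1$ and $u^*u=\sum_i p_i+\sum_j q_j=1$, so $u$ is a unitary of $M_k(A)$; the same bookkeeping gives $u\,\lambda(e_{kl})=\lambda'(e_{kl})\,u$ and $u\,\lambda(f_{kl})=\lambda'(f_{kl})\,u$, hence $u\lambda u^*=\lambda'$. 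The only genuinely nontrivial ingredient is the passage from equality of $\mathrm{K}_0$-classes to Murray--von Neumann equivalence, i.e.\ cancellation of projections in dimension drop interval algebras; this is precisely where the special structure of $A$ (rather than an arbitrary unital C*-algebra) enters, and the degenerate cases in which $\lambda(1_r)$ or $\lambda(1_s)$ vanishes are harmless since stable finiteness forces the corresponding primed projection to vanish as well.
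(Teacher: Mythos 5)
Your argument is correct, and it is essentially the argument the paper is implicitly invoking: the paper gives no proof of this lemma but defers to Lemma 6.3 of Dadarlat--Gong, whose proof is exactly this reduction to $\mathrm{K}_0$-classes of the minimal projections, followed by cancellation (stable rank one for these one-dimensional NCCW complexes) to get Murray--von Neumann equivalences, and the standard assembly of the intertwining unitary from the matrix units. Your computation of $u^*u=uu^*=1$ and of the intertwining relations, and your handling of the degenerate case $\lambda(e_{11})=0$, are all sound.
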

\begin{lem}\label{simcase2}
  Let $A$ be a generalized dimension drop interval algebra, $\lambda,\lambda': M_r\oplus M_s\rightarrow M_k(A)$ be two homomorphisms. Suppose that $\lambda$ is unital and $[\lambda(e_{11},0)]\geq [\lambda'(e_{11},0)]$, $[\lambda(0,f_{11})]\geq [\lambda'(0,f_{11})]$. Then there are a homomorphism $\eta:M_r\oplus M_s\rightarrow M_k(A)$ and a unitary $u\in M_k(A)$ such that $u\lambda u^*=\lambda'+\eta$.
\end{lem}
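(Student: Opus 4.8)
The plan is to reduce the statement to the unital comparison result of Lemma~\ref{simcase1}. The guiding idea is to let $\eta$ absorb exactly the $K_0$-deficit of $\lambda'$ relative to $\lambda$, so that $\lambda'\oplus\eta$ becomes a unital homomorphism inducing the same map on $K_0$ as $\lambda$; Lemma~\ref{simcase1} then supplies the conjugating unitary.

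First I would introduce the difference classes
$$
g_0=[\lambda(e_{11},0)]-[\lambda'(e_{11},0)],\qquad g_1=[\lambda(0,f_{11})]-[\lambda'(0,f_{11})]
$$
in $K_0(A)\cong\mathbb{Z}$, which by hypothesis both lie in the positive cone. Set $Q=1_{M_k(A)}-\lambda'(1_r,1_s)$. Using that $\lambda$ is unital together with $[\lambda(1_r,0)]=r[\lambda(e_{11},0)]$ and $[\lambda(0,1_s)]=s[\lambda(0,f_{11})]$ (and the analogous identities for $\lambda'$), a short computation gives $[Q]=rg_0+sg_1$ in $K_0$. I would then build a unital homomorphism $\eta\colon M_r\oplus M_s\to QM_k(A)Q$ with $[\eta(e_{11},0)]=g_0$ and $[\eta(0,f_{11})]=g_1$; concretely this amounts to decomposing $Q$ into $r$ mutually orthogonal, mutually equivalent projections of class $g_0$ and $s$ of class $g_1$. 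Because $\eta$ takes values in the corner $QM_k(A)Q$, its range is orthogonal to that of $\lambda'$, and $\eta(1_r,1_s)=Q=1-\lambda'(1_r,1_s)$, so $\lambda'+\eta$ is a genuine unital homomorphism $M_r\oplus M_s\to M_k(A)$.

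With $\eta$ in hand the proof concludes quickly. By construction $[(\lambda'+\eta)(e_{11},0)]=[\lambda'(e_{11},0)]+g_0=[\lambda(e_{11},0)]$ and similarly $[(\lambda'+\eta)(0,f_{11})]=[\lambda(0,f_{11})]$, so $\lambda$ and the unital homomorphism $\lambda'+\eta$ induce the same map on $K_0$. Lemma~\ref{simcase1} then yields a unitary $u\in M_k(A)$ with $u\lambda u^*=\lambda'+\eta$, which is the desired conclusion. The main obstacle is the construction of $\eta$: one must know that the non-negative classes $g_0,g_1$ are realized by actual projections and, more delicately, that $Q$ can be partitioned into orthogonal projections of these prescribed classes. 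This rests on cancellation of projections in generalized dimension drop interval algebras (as used already in Proposition~\ref{scale}) and on the explicit description of the scaled ordered group $K_0(M_k(A))$ coming from the endpoint ranks $(\pi_0,\pi_1)$; the compatibility $[Q]=rg_0+sg_1$ together with $g_0,g_1\ge 0$ is precisely what makes such a decomposition possible, and this is where the structure specific to dimension drop algebras, rather than the general $K_0$-formalism, enters.
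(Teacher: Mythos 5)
Your argument is correct and is essentially the proof the paper intends: the paper proves Lemma \ref{simcase2} by deferring to Lemma 6.4 of \cite{DG}, whose proof is exactly this absorption argument --- complete $\lambda'$ to a unital homomorphism by adding a complementary $\eta$ realizing the $K_0$-deficits $g_0,g_1$ inside the corner $QM_k(A)Q$, then invoke the unital uniqueness statement (here Lemma \ref{simcase1}). Your identification of the one nontrivial ingredient --- that positivity of $g_0,g_1$ plus cancellation and comparison in $M_k(A)$ lets one partition $Q$ into orthogonal projections of the prescribed classes --- is exactly where the argument uses the structure of generalized dimension drop interval algebras.
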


\begin{lem}\label{unires1}
  Let $A$ and $B$ be generalized dimension drop interval algebras, $F\subset M_r(A)$ be a finite set, and let
  $\lambda,\lambda': M_r(A)\rightarrow M_k(B)$ be unital homomorphisms with finite dimensional image such that $[\lambda]=[\lambda']\in \mathrm{KK}(A,B)$. Then there exists a unitary $u\in M_k(B)$ such that
  $$
  \|u^*\lambda(f)u-\lambda'(f)\|<3\omega(F).
  $$
\end{lem}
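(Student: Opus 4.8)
The plan is to reduce the assertion to the finite--dimensional uniqueness statement of Lemma \ref{simcase1}, using the weak variation to absorb the one feature of a finite--dimensional--image homomorphism that is \emph{not} detected by its $\mathrm{KK}$--class, namely the location of its interior spectrum. First I would record the spectral structure of $\lambda$ and $\lambda'$. Since each has finite--dimensional image, its kernel is an ideal of $M_r(A)\cong I[rm_0,rm,rm_1]$ of finite codimension, hence the ideal of functions vanishing on a finite set $T\subset[0,1]$; thus $\lambda$ factors as $\Lambda\circ E_T$, where $E_T=\bigoplus_{x\in T}\pi_x$ is the evaluation at the points of $T$ (giving $M_{rm}$ at interior points and $M_{rm_0},M_{rm_1}$ at the endpoints, cf. Definition \ref{irrep}) and $\Lambda$ is a homomorphism out of the finite--dimensional algebra $\bigoplus_{x\in T}M_{d_x}$. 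Passing to a common refinement of the point sets, I may assume $\lambda=\Lambda\circ E$ and $\lambda'=\Lambda'\circ E$ for a single such evaluation $E$. In the language of Definition \ref{defspv} this exhibits $\lambda,\lambda'$ through their spectral data: endpoint multiplicities $s_0,s_1$ of $\pi_0,\pi_1$ together with a finite interior spectrum with multiplicities. Because all interior point evaluations of $A$ are mutually homotopic (hence $\mathrm{KK}$--equivalent) while $\pi_0,\pi_1$ carry distinct $\mathrm{KK}$--classes, the hypothesis $[\lambda]=[\lambda']$ forces, via the uniqueness of $s_0,s_1$ in Lemma \ref{homstyle} (applied after composing with a point evaluation of $B$), that $\lambda$ and $\lambda'$ have the \emph{same} endpoint multiplicities and the \emph{same} total interior multiplicity. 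What $\mathrm{KK}$ cannot see is how the interior multiplicity is distributed over $(0,1)$.

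Next I would remove this discrepancy with the weak variation. Fixing a reference point $x_*\in(0,1)$, for each interior eigenvalue $x_i$ occurring in $\lambda$ the definition of $\omega(F)$ (Definition \ref{3.1ell}) supplies a unitary $w_i\in U(rm)$ with $\max_{f\in F}\|w_i f(x_i)w_i^*-f(x_*)\|\le\omega(F)$, up to an arbitrarily small error. Assembling the $w_i$, together with the identity on the endpoint blocks, into a single unitary of $M_k(B)$ and conjugating, I obtain a homomorphism $\tilde\lambda$ whose entire interior spectrum is concentrated at $x_*$ and which satisfies $\tilde\lambda\sim_{F,\,\omega(F)}\lambda$; the identical construction applied to $\lambda'$ produces $\tilde\lambda'\sim_{F,\,\omega(F)}\lambda'$. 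Now $\tilde\lambda$ and $\tilde\lambda'$ both factor through the fixed finite--dimensional algebra $M_{rm_0}\oplus M_{rm}\oplus M_{rm_1}$ (the two endpoints and the single interior point $x_*$), and by the previous paragraph they induce the same map on the $\mathrm{K}_0$ of this algebra. A three--summand version of Lemma \ref{simcase1}, whose proof is unchanged, then yields an exact unitary $u\in M_k(B)$ with $u\tilde\lambda u^*=\tilde\lambda'$. Chaining the two genuine moves (which each cost $\omega(F)$) with this exact equivalence, the triangle inequality gives a unitary realizing $\|u^*\lambda(f)u-\lambda'(f)\|\le 2\,\omega(F)$ on $F$, comfortably within the stated bound $3\,\omega(F)$.

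The main obstacle is the assembly step. The unitaries $w_i$ are produced pointwise on the \emph{domain} interval, yet they must be organized into a genuine unitary of $M_k(B)=M_k(I[n_0,n,n_1])$ that rotates the finite--dimensional frame continuously over the \emph{codomain} interval and respects the dimension--drop conditions $u(0)\in M_{kn_0}\otimes 1_{n/n_0}$ and $u(1)\in M_{kn_1}\otimes 1_{n/n_1}$ at its endpoints. Verifying that the block decomposition coming from $E$ is compatible with these constraints, and that the $\mathrm{K}_0$--data survives the normalization so that Lemma \ref{simcase1} applies verbatim, is the delicate point; the gap between the two genuine moves and the stated constant $3$ (which is not claimed to be optimal) is precisely the slack absorbing the approximate realization of the infimum in $\omega(F)$ and this endpoint bookkeeping.
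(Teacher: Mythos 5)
There is a genuine gap at the pivot of your argument: the claim that $[\lambda]=[\lambda']$ forces $\lambda$ and $\lambda'$ to have the same endpoint multiplicities. This is false, and it is precisely the difficulty the lemma is really about. An interior evaluation $\pi_t$ is homotopic both to $\pi_0\otimes 1_{m/m_0}$ (i.e., $m/m_0$ copies of $\pi_0$) and to $\pi_1\otimes 1_{m/m_1}$ ($m/m_1$ copies of $\pi_1$); consequently trading $m/m_0$ copies of $\pi_0$ for $m/m_1$ copies of $\pi_1$ leaves the $\mathrm{KK}$-class unchanged. In the picture of Theorem \ref{CM} this is exactly the subgroup $M(A,B)$ being divided out: the matrix of endpoint multiplicities is well defined only modulo the image of $(\frac{m}{m_0},-\frac{m}{m_1})$. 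Lemma \ref{homstyle}, which you invoke, pins down $s_0,s_1$ only subject to $0\leq s_0\leq m/m_0-1$, $0\leq s_1\leq m/m_1-1$, i.e., it determines the residues, not the multiplicities themselves. So after your normalization the two homomorphisms need not induce the same map on $\mathrm{K}_0$ of your three-summand finite-dimensional algebra, the three-summand version of Lemma \ref{simcase1} does not apply, and the exact unitary equivalence you conclude with cannot exist in general: $\pi_0^{\oplus m/m_0}$ and $\pi_1^{\oplus m/m_1}$ are $\mathrm{KK}$-equivalent but have different kernels, hence are not unitarily equivalent as representations.

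The paper's proof spends most of its length on exactly this point. After pushing the interior spectrum to the endpoint $0$ rather than to an interior reference point (a harmless difference from your normalization), it writes $\widetilde{\lambda}=\phi\circ(\pi_0\oplus\pi_1)$ and $\widetilde{\lambda'}=\phi'\circ(\pi_0\oplus\pi_1)$, extracts the multiplicities $k_1,k_2,k_1',k_2'$, and uses Theorem \ref{CM} to express the discrepancy as $l\cdot(m/m_0,-m/m_1)$ for some integer $l$. It then spends one further weak-variation move --- the unitary $u_0$ with $\|u_0\,\pi_0(f)\otimes 1_{m/m_0}\,u_0^*-\pi_1(f)\otimes 1_{m/m_1}\|<\omega(F)$ --- to convert the $(k_1-k_1')$ excess copies of $\pi_0$ into $(k_2'-k_2)$ copies of $\pi_1$ (via the projections $p_0,q_0$), and only then does Lemma \ref{simcase1} produce an exact unitary. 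This extra move is where the third $\omega(F)$ in the statement comes from; your count of $2\omega(F)$ is a symptom of the missing step. By contrast, the ``assembly'' issue you single out as the main obstacle is comparatively routine. To repair your proof, keep your first normalization, but then compare the endpoint multiplicities and treat the case where they differ as the paper does.
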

\begin{proof}
  Since $\lambda$ has finite dimensional image, there are $t_1,\cdots,t_s\in (0,1)$ such that $\lambda$ factors through
  $$
  \pi_{t_1}\oplus\cdots\oplus\pi_{t_s}\oplus\pi_0\oplus\pi_1:M_r(A)\rightarrow (\bigoplus_{i=1}^sM_{rm})\oplus M_{rm_0}\oplus M_{rm_1}.
  $$
  By the definition of weak variation, there are unitaries $u_i\in M_{rm}$ such that
  $$
  \|u_i\pi_{t_i}(f)u_i^*-\pi_0(f)\otimes 1_{m/m_0}\|<\omega(F).
  $$
  In particular, there exists $u_0\in M_{rm}$ such that
  $$
  \|u_0\pi_0(f)\otimes 1_{m/m_0}u_0^*-\pi_1(f)\otimes 1_{m/m_1}\|<\omega(F).
  $$

  It follows that there exist a unitary $v\in M_k(B)$ and  a unital homomorphism $\widetilde{\lambda}:M_r(A)\rightarrow M_k(B)$ with finite dimensional image such that
  $$
  \|\lambda(f)-v\widetilde{\lambda}(f)v^*\|<\omega(F)
  $$
  and $\widetilde{\lambda}$ factors through $\pi_0\oplus\pi_1:M_r(A)\rightarrow M_k(B)$. In addition, since $\pi_{t_i}(f)$ is homotopic to $\pi_0(f)\otimes 1_{m/m_0}$, we have $[\lambda]=[\widetilde{\lambda}]$ in $\mathrm{KK}(A,B)$. Of course we can approximate $\lambda'$ by a homomorphism $\widetilde{\lambda'}$ with similar properties.

   The restrictions of $\widetilde{\lambda}$ and $\widetilde{\lambda'}$ to $M_r(\mathbb{C})\subset M_r(A)$ induce the same map on $K_0$, and so by Lemma \ref{simcase1}, there is a unitary $u\in M_k(B)$ such that $u\widetilde{\lambda}(a) u^*=\widetilde{\lambda'}(a)$ for all $a\in M_r(\mathbb{C})$. We may assume that $\widetilde{\lambda}$ and $\widetilde{\lambda'}$ coincide on $M_r(\mathbb{C})$.

  Since $\widetilde{\lambda}$ and $\widetilde{\lambda'}$ factor through
  $\pi_0\oplus\pi_1$, there are homomorphisms $\phi,\phi':M_{rm_0}\oplus M_{rm_1}\rightarrow M_k(B)$ such that $\widetilde{\lambda}=\phi\circ\pi_e$, $\widetilde{\lambda'}=\phi'\circ\pi_e$.
  Write $\phi,\phi'$  as
 $$
 \phi(a,b)=a\otimes \phi(e_{11},0)+b\otimes \phi(0,f_{11}),
 $$
 $$
 \phi'(a,b)=a\otimes \phi'(e_{11},0)+b\otimes \phi'(0,f_{11}).
 $$
 Then $\phi(e_{11},0),\phi(0,f_{11}),\phi'(e_{11},0),\phi'(0,f_{11})\in M_k(B)$ are projections, and so there exist non-negative integers $k_1,k_2,k_1',k_2'$ such that
$$
[\phi(e_{11},0)]=
k_1\cdot
  \left(
  \begin{array}{c}
     n_0 \\
    n_1
   \end{array}
  \right),\quad
[\phi(0,f_{11})]=
k_2\cdot
  \left(
  \begin{array}{c}
     n_0 \\
    n_1
   \end{array}
  \right),
$$$$
[\phi'(e_{11},0)]=
k_1'\cdot
  \left(
  \begin{array}{c}
     n_0 \\
    n_1
   \end{array}
  \right),\quad
[\phi'(0,f_{11})]=
k_2'\cdot
  \left(
  \begin{array}{c}
     n_0 \\
    n_1
   \end{array}
  \right).
$$

Since $\phi,\phi'$ is unital, we have
$$
m_0k_1+m_1k_2=m_0k_1'+m_1k_2'=k/r,
$$
$$
(k_1-k_1')m_0=(k_2'-k_2)m_1.
$$

If $k_1=k_1'$, then $k_2=k_2'$, and by Lemma \ref{simcase1}, there exists a unitary $v\in M_k(B)$, such that
$$
v^*\phi v=\phi'.
$$
And for the same unitary, we have
$$
\|v^*\widetilde{\lambda}(f)v-\widetilde{\lambda'}(f)\|<\omega(F).
$$

Without loss of generality, we may assume that $k_1> k_1'\geq 0$. Since $[\widetilde{\lambda}]$=$[\widetilde{\lambda'}]$,  by Theorem \ref{CM}, there exists an integer $l>0$ such that
 $$
  \left(
  \begin{array}{c}
     k_1n_0-k_1'n_0 \\
     k_2n_1-k_2'n_1
   \end{array}
  \right)=
  l\cdot
  \left(
  \begin{array}{c}
     m/m_0 \\
    -m/m_1
   \end{array}
  \right).
  $$
Now we have
$$
(k_1-k_1')n_0=l\cdot m/m_0,
\quad
(k_2'-k_2)n_1=l\cdot m/m_1.
$$
Then
$$
(k_1-k_1')m_0n_1=(k_2'-k_2)n_1m_1=lm_0\cdot m/m_0,
$$
$$
(k_2'-k_2)m_1n_0=(k_1-k_1')n_0m_0=lm_1\cdot m/m_1.
$$
We have
$$
(k_1-k_1')\cdot
  \left(
  \begin{array}{c}
     n_0 \\
    n_1
   \end{array}
  \right)
= l\cdot
  \left(
  \begin{array}{c}
    m/m_0 \\
    m/m_0
   \end{array}
  \right),
$$
$$
(k_2'-k_2)\cdot
  \left(
  \begin{array}{c}
     n_0 \\
    n_1
   \end{array}
  \right)
= l\cdot
  \left(
  \begin{array}{c}
    m/m_1 \\
    m/m_1
   \end{array}
  \right).
$$
Then there exist projections $p_0,q_0\in M_k(B)$ such that $p_0,q_0\leq \phi(e_{11})$ and
$$
[p_0]=
(k_1-k_1')
\left(
  \begin{array}{c}
     n_0 \\
    n_1
   \end{array}
  \right),\quad[q_0]=
(k_2'-k_2)\cdot
\left(
  \begin{array}{c}
     n_0 \\
    n_1
   \end{array}
  \right).
$$
By the definition of $\omega(F)$, it follows that
$$
\|w\pi_0(f)\otimes p_0w^*-\pi_1(f)\otimes q_0\|<\omega(F)
$$
for some unitary $w\in M_k(B)$. Then we can construct a homomorphism $\phi'':M_{rm_0}\oplus M_{rm_1}\rightarrow M_k(B)$
such that
$$
\phi''(a,b)=a\otimes (\phi(e_{11},0)-p_0)+b\otimes (\phi(0,f_{11})+q_0)
$$
and
$$
[\phi''(e_{11},0)]=[\phi'(e_{11},0)],\quad[\phi'(0,f_{11})]=[\phi''(0,f_{11})].
$$
Then there exists a unitary $v'\in M_k(B)$ such that
$$
\|{v'}^*\widetilde{\lambda}(f)v'-\widetilde{\lambda'}(f)\|<2\omega(F).
$$

Recall that
$$
\lambda\sim_{F,\omega(F)}\widetilde{\lambda},\quad \lambda'\sim_{F,\omega(F)}\widetilde{\lambda'}.
$$
It follows there exists a unitary $u\in M_k(B)$ such that
  $$
  \|u^*\lambda(f)u-\lambda'(f)\|<3\omega(F).
  $$
\end{proof}

\begin{lem}\label{unires2}
  Let $A=I[m_0,m,m_1]$, let $F\subset M_r(A)$ be a finite set, and let
  $\lambda,\lambda': M_r(A)\rightarrow M_k(B)$ be homomorphisms with finite dimensional image, where $B$ is a generalized dimension drop interval algebra. Suppose that $\lambda$ is unital and $[\lambda]-[\lambda']$ is $m$-large. Then there exist a unitary $u\in M_k(B)$ and a homomorphism $\eta$ with finite dimensional image such that
  $$
  \|u^*\lambda(f)u-\lambda'(f)\oplus\eta(f)\|<4\omega(F).
  $$
\end{lem}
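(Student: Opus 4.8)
The plan is to follow the scheme of the proof of Lemma~\ref{unires1}, but to replace the appeal to Lemma~\ref{simcase1} by one to Lemma~\ref{simcase2}, so that the $m$-largeness of $[\lambda]-[\lambda']$ is converted into an actual splitting off of $\lambda'$ together with a complementary summand $\eta$. First I would reduce both homomorphisms to the boundary. Since $\lambda$ and $\lambda'$ have finite dimensional image, each factors through a finite family of point evaluations $\pi_{t_1}\oplus\cdots\oplus\pi_{t_s}\oplus\pi_0\oplus\pi_1$. Invoking the definition of $\omega(F)$ exactly as in Lemma~\ref{unires1}, I would conjugate every interior evaluation $\pi_{t_i}$ into the form $\pi_0(\cdot)\otimes 1$ (and, where convenient, interchange $\pi_0$- and $\pi_1$-blocks), each such move costing at most $\omega(F)$. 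This produces homomorphisms $\widetilde\lambda,\widetilde{\lambda'}\colon M_r(A)\to M_k(B)$ that factor through $\pi_0\oplus\pi_1$, with $\lambda\sim_{F,\omega(F)}\widetilde\lambda$ and $\lambda'\sim_{F,\omega(F)}\widetilde{\lambda'}$ and $\widetilde\lambda$ still unital. Because sliding an interior point to an endpoint is a homotopy, these replacements preserve $\mathrm{KK}$-classes, so $[\widetilde\lambda]-[\widetilde{\lambda'}]=[\lambda]-[\lambda']$ remains $m$-large.

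Writing $\widetilde\lambda=\phi\circ(\pi_0\oplus\pi_1)$ and $\widetilde{\lambda'}=\phi'\circ(\pi_0\oplus\pi_1)$ for homomorphisms $\phi,\phi'\colon M_{rm_0}\oplus M_{rm_1}\to M_k(B)$, the task reduces to comparing two homomorphisms out of a finite dimensional algebra, which is precisely the setting of Lemma~\ref{simcase2}. To apply it I need the two endpoint inequalities $[\phi(e_{11},0)]\ge[\phi'(e_{11},0)]$ and $[\phi(0,f_{11})]\ge[\phi'(0,f_{11})]$ in $K_0(B)=\mathbb{Z}$; granting these, Lemma~\ref{simcase2} furnishes a unitary $u\in M_k(B)$ together with a homomorphism whose values give a complement which, factoring through $\pi_0\oplus\pi_1$, has finite dimensional image and will serve as $\eta$, with $u\widetilde\lambda u^*=\widetilde{\lambda'}\oplus\eta$ exactly.

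The main obstacle is deducing the two endpoint inequalities from the single $m$-largeness hypothesis, which a priori controls only the total class $([\lambda]-[\lambda'])_*([1_A])$. Here I would argue as in the $k_1>k_1'$ computation of Lemma~\ref{unires1}: the imbalance between the two endpoint blocks is, by the six-term sequence for $B$ (equivalently Theorem~\ref{CM}), a common integer multiple $l$ of $m/m_0$ and $m/m_1$, and since $[\lambda]-[\lambda']$ dominates $m\cdot[p]$ there is enough room to absorb this imbalance. Concretely, I would transfer a suitable subprojection between the $\pi_0$- and $\pi_1$-blocks, using the construction of the projections $p_0,q_0$ from Lemma~\ref{unires1}, whose existence and the attendant interchange cost one further $\omega(F)$; after this transfer both endpoint inequalities hold simultaneously. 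This projection-transfer step is where the bulk of the bookkeeping lies and is the part I expect to be delicate.

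Finally I would assemble the estimates. Combining the exact splitting $u\widetilde\lambda u^*=\widetilde{\lambda'}\oplus\eta$ with the approximations $\lambda\sim_{F,\omega(F)}\widetilde\lambda$ and $\lambda'\sim_{F,\omega(F)}\widetilde{\lambda'}$ and the additional $\omega(F)$ incurred by the projection transfer, the triangle inequality yields a unitary (still denoted $u$) with $\|u^*\lambda(f)u-\lambda'(f)\oplus\eta(f)\|<4\omega(F)$ for all $f\in F$, which is the desired conclusion.
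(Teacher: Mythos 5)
Your overall strategy is the paper's: push $\lambda$ and $\lambda'$ to the boundary, writing $\widetilde\lambda=\phi\circ(\pi_0\oplus\pi_1)$ and $\widetilde{\lambda'}=\phi'\circ(\pi_0\oplus\pi_1)$ at a cost of $\omega(F)$ each, then establish the two endpoint inequalities required by Lemma \ref{simcase2} after transferring blocks between the $\pi_0$- and $\pi_1$-representations (each such transfer, exchanging $m/m_0$ copies of $\pi_0$ for $m/m_1$ copies of $\pi_1$, costing one further $\omega(F)$ by the definition of weak variation), and finally assemble the estimates. That is exactly how the paper proceeds.

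The gap is in your justification of the transfer step. You claim that the imbalance between the two endpoint blocks is, ``by the six-term sequence (Theorem \ref{CM})'', a common integer multiple of $m/m_0$ and $m/m_1$. That conclusion is available only when $[\widetilde\lambda]=[\widetilde{\lambda'}]$ in $\mathrm{KK}(A,B)$, i.e.\ when the difference of the two diagrams lies in $M(A,B)$ --- which is the situation of Lemma \ref{unires1}, not of the present lemma. Here the hypothesis is only that $[\lambda]-[\lambda']$ is $m$-large, so, writing $k_1,k_2$ and $k_1',k_2'$ for the multiplicities of $\pi_0,\pi_1$ in $\phi,\phi'$, all you know is $m_0(k_1-k_1')+m_1(k_2-k_2')\ge m$; the vector $(k_1-k_1',k_2-k_2')$ need not be proportional to $(m/m_0,-m/m_1)$, and the ``imbalance'' itself is in general not transferable. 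The argument that actually works (and is the paper's) is: first reduce $k_1$ and $k_1'$ modulo $m/m_0$ so that $0\le k_1,k_1'\le m/m_0-1$ (a transfer of whole $rm$-blocks, legitimate for any homomorphism with finite dimensional image, costing $2\omega(F)$); then the largeness inequality forces the dichotomy that either $k_1\ge k_1'$, in which case $m_1(k_2-k_2')\ge m-m_0(k_1-k_1')\ge m_0>0$ and Lemma \ref{simcase2} applies directly, or $k_1<k_1'$, in which case $k_1+m/m_0\ge k_1'$ and $k_2-m/m_1\ge k_2'$, so a single further transfer of one $rm$-block from the $\pi_1$-end to the $\pi_0$-end restores both inequalities. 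Without this normalization and case analysis your proposal has no control over how many blocks must be moved or in which direction, and the appeal to Theorem \ref{CM} cannot supply it; the rest of your outline (the application of Lemma \ref{simcase2} and the final triangle-inequality count) is fine once this is repaired.
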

\begin{proof}
 Assume that $\phi,\phi'$ are as in Lemma \ref{unires1}. In this case, we have
$$
m_0k_1+m_1k_2-(m_0k_1'+m_1k_2')\geq m.
$$

Next, we assume that $0\leq k_1,k_1'\leq m/m_0-1$. If $k_1\geq m/m_0$, there exists an integer $k'$ such that
$$
m/m_0|(k_1-k'),\quad 0\leq k_1-k'\leq m/m_0-1.
$$
By the the same technique as in the proof of Lemma \ref{unires1}, we can construct a homomorphism $\phi'':M_{rm_0}\oplus M_{rm_1}\rightarrow M_k(B)$
such that
$$
[\phi''(e_{11},0)]=
(k_1-k')\cdot
  \left(
  \begin{array}{c}
     n_0 \\
    n_1
   \end{array}
  \right)
$$
and
$$
\|v\phi''\circ\pi_e(f)v^*-\phi\circ\pi_e(f)\|<\omega(F)
$$
for some unitary $v\in M_k(B)$. We may use $k_1-k'$ instead of $k_1$, and the same for $k_1'$, and then,
up to an adjustment of $2\omega(F)$, we may assume that $0\leq k_1,k_1'\leq m/m_0-1$.

We have the following two cases:

Case 1: If $k_1\geq k_1'$, then $k_2\geq k_2'$. We can construct a homomorphism $\eta':M_{rm_0}\oplus M_{rm_1}\rightarrow M_k(B)$ whose image is orthogonal to the image of $\phi'$.

Set
$$
\eta'(a,b)=a\otimes (\phi(e_{11})-\phi'(e_{11}))+b\otimes (\phi(f_ {11})-\phi'(f_{11})).
$$
Obviously, there exists a unitary $v\in M_k(B)$ such that
$$
v^*\phi v=\phi'+\eta'.
$$
Let $\eta:M_r(A)\rightarrow M_k(B)$ be a homomorphism extending $\eta'$; then for the same unitary, we have
$$
\|v^*\lambda(f)v-\lambda'(f)\oplus\eta(f)\|<3\omega(F).
$$

Case 2: If $k_1\leq k_1'$, then $m_1k_2\geq m_1k_2'+m$, and we have
$$
k_1+m/m_0\geq k_1'
\quad
{\rm and}
\quad
k_2-m/m_1\geq k_2'.
$$
Then there exist a unitary $v\in M_k(B)$  and a homomorphism $\phi'':M_{rm_0}\oplus M_{rm_1}\rightarrow M_k(B)$
such that
$$
\pi_e\circ\phi(f)\sim_{F,\omega(F)}\pi_e\circ\phi''(f),
$$
and
$$
[\phi''(e_{11},0)]\geq[\phi'(e_{11},0)],\quad
[\phi''(0,f_{11})]\geq[\phi'(0,f_{11})].
$$
By case 1, there exist a unitary $v\in M_k(B)$ and a homomorphism  $\eta:M_r(A)\rightarrow M_k(B)$ with finite dimensional image such that
$$
\|{v'}^*\phi''\circ \pi_e(f)v'-\lambda'(f)-\eta(f)\|<\omega(F).
$$

Combining these two cases, we have
$$
\|u^*\lambda(f) u-\lambda'(f)-\eta(f)\|<4\omega(F)
$$
for some unitary $u\in M_k(B)$.

\end{proof}

Now we are ready to prove the following uniqueness theorem which is crucial for the classification.
\begin{thrm}\label{unithm0}
Let $A$ be a finite direct sum of matrix algebras over generalized dimension drop interval algebras, and let
$B=\underrightarrow{\lim}(B_n,\nu_{n,m})$ be a $C^*$-algebra inductive limit
of finite direct sums of generalized dimension drop interval algebras. Suppose that B is of real rank zero.
Let
$\phi,\psi:A\rightarrow B_n$ be two homomorphisms with $[\phi]=[\psi]$ in $\mathrm{KK}(A,B_n)$. Then for any finite set $F\subset A$ and any
$\varepsilon>0$, there exist $r\geq n$ and a unitary $u\in B_r$ such that
$$
\|u(\nu_{n,r}\circ\phi)(f)u^*-(\nu_{n,r}\circ\psi)(f)\|<3\varepsilon+\omega(F)
$$
for all $f\in F$, where $\omega(F)$ is as in \ref{3.1ell}.
\end{thrm}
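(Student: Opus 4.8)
The plan is to push both maps far enough into the limit that real rank zero forces them to be approximately finite-dimensional, and then to compare the resulting finite-dimensional-image homomorphisms by the rigidity Lemmas \ref{unires1} and \ref{unires2}, the residual error being measured by the weak variation $\omega(F)$. First I would reduce to a single summand: writing $A=\bigoplus_i M_{N_i}(A_i)$ and $B_r=\bigoplus_j B_r^{(j)}$, it suffices to treat each pair of coordinates separately and then assemble the unitaries, so I may assume $A=M_N(A_0)$ with $A_0$ a generalized dimension drop interval algebra and $B_r$ a single generalized dimension drop interval algebra. Put $\Phi=\nu_{n,r}\circ\phi$ and $\Psi=\nu_{n,r}\circ\psi$; since $[\phi]=[\psi]$ in $\mathrm{KK}(A,B_n)$, functoriality gives $[\Phi]=[\Psi]$ in $\mathrm{KK}(A,B_r)$ for every $r\ge n$, and in particular $[\Phi(1_A)]=[\Psi(1_A)]$.

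Because $B$ has real rank zero, Lemma \ref{rrpro} shows that the images under $\nu_{n,r}$ of the relevant self-adjoint generators have approximately finite spectrum once $r$ is large, while Theorem \ref{spsmall} and Corollary \ref{wvsmall} let me simultaneously arrange the spectral variation and the weak variation of the images to be negligible. Fixing a large multiplicity $L$ (to be pinned down by the K-theory bookkeeping below, in particular with $L\ge m$) and a small tolerance, I would then apply the decomposition Theorem \ref{decom} to $\Phi$ and to $\Psi$. This yields projections $p,q\in M_k(B_r)$, unital finite-dimensional-range homomorphisms $\lambda\colon A\to(1-p)M_k(B_r)(1-p)$ and $\lambda'\colon A\to(1-q)M_k(B_r)(1-q)$ with $[\lambda(1_A)]\ge L[p]$ and $[\lambda'(1_A)]\ge L[q]$, together with the estimates $\|\Phi(f)-p\Phi(f)p-\lambda(f)\|<\varepsilon$ and $\|\Psi(f)-q\Psi(f)q-\lambda'(f)\|<\varepsilon$ on $F$ (and the corresponding near-commutation of $p,q$ with the images).

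It remains to compare $p\Phi p\oplus\lambda$ with $q\Psi q\oplus\lambda'$. The corners $p\Phi p$ and $q\Psi q$ are only $\varepsilon$-multiplicative, but they are supported on the small projections $p,q$, whereas $\lambda,\lambda'$ carry multiplicity at least $L$; this is exactly the setting in which the corners are absorbed into the large finite-dimensional parts. From $[\Phi]=[\Psi]$ the classes $[\lambda]$ and $[\lambda']$ differ only by the small corner classes, so after choosing $L$ large the $m$-large hypothesis of Lemma \ref{unires2} is met, and $\lambda$ absorbs the corner difference, producing a finite-dimensional-range homomorphism $\eta$ and a reconciling unitary. Since the two finite-dimensional parts then share the same endpoint multiplicities $s_0,s_1$ (these being determined by the common KK-class, by Lemma \ref{homstyle}) and the same number of interior eigenvalue points, a single block-diagonal weak-variation move matches their interior spectra at a cost of at most $\omega(F)$, precisely as in Lemmas \ref{unires1} and \ref{unires2}. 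Collecting the $\varepsilon$ from each of the two decompositions and one more $\varepsilon$ from the corner reconciliation, together with the single $\omega(F)$ from the spectral rearrangement, yields a unitary $u\in B_r$ with $\|u\Phi(f)u^*-\Psi(f)\|<3\varepsilon+\omega(F)$ for all $f\in F$; reassembling the coordinates gives the theorem.

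The step I expect to be hardest is the absorption of the corners. Since $p\Phi p$ and $q\Psi q$ are not genuine homomorphisms, one must verify that they contribute only a controllable amount of K-theory and can be folded into the high-multiplicity parts $\lambda,\lambda'$ without disturbing the identity $[\Phi]=[\Psi]$; this is where the interplay between the choice of $L$, the tolerance in Theorem \ref{decom}, and the $m$-large hypothesis of Lemma \ref{unires2} must be arranged in the correct order. A secondary delicate point is keeping the spectral rearrangement down to a single weak-variation move, so that the final bound carries $\omega(F)$ rather than a larger multiple of it; this in turn requires matching the combinatorial type (endpoint multiplicities and interior point count) of the two finite-dimensional parts in advance, using Lemma \ref{homstyle} together with the equality of KK-classes.
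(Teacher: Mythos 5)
There is a genuine gap at the step you yourself flag as hardest: the treatment of the corners. After the decomposition, the compressions $p\Phi p$ and $q\Psi q$ are merely approximately multiplicative maps of full operator norm (for $f\in F$ one only has $\|p\Phi(f)p\|\leq\|f\|$, not smallness), so they cannot be ``absorbed'' into the high-multiplicity finite-dimensional parts $\lambda,\lambda'$: absorption in the sense of Lemma \ref{unires2} applies to a pair of genuine homomorphisms with finite-dimensional image, one of which dominates the other in $\mathrm{K}_0$, and the corners are not close to any such homomorphism. The smallness of $[p]$ and $[q]$ controls multiplicities, not norms, and your error budget would have to pay the full norm of the unmatched corner terms. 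What is missing is the stable uniqueness input: the paper first applies Theorem \ref{prouni} (resting on property (H) and Dadarlat's Lemma 1.4) to obtain a finite-dimensional $\eta$ and a unitary $v$ with $v(\phi\oplus\eta)v^*\approx\psi\oplus\eta$ already in $M_{k+1}(B_n)$, and then uses semiprojectivity of the image of $\eta$ to push this relation forward through the merely $\varepsilon_1$-multiplicative corner map $\theta=p\nu(\cdot)p$, producing a genuine homomorphism $\mu$ and a unitary $w_1$ with $w_1(\theta\phi(f)\oplus\mu(f))w_1^*\approx\theta\psi(f)\oplus\mu(f)$. This is the only mechanism in the argument that reconciles the two corners, and your proposal contains no substitute for it.

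A secondary structural problem: you apply Theorem \ref{decom} separately to $\Phi=\nu_{n,r}\circ\phi$ and $\Psi=\nu_{n,r}\circ\psi$, which yields two unrelated projections $p\neq q$ and two unrelated finite-dimensional parts, making even the bookkeeping of ``corner versus complement'' incompatible between the two maps. The paper instead decomposes the connecting map $\nu_{n,r}$ once, on the set $G=\phi(F)\cup\psi(F)\cup G_1$, so that both $\nu\circ\phi$ and $\nu\circ\psi$ split over the \emph{same} projection $p$ with the same $\theta$ and the same $\lambda$; the comparison then reduces to $\lambda\circ\phi$ versus $\lambda\circ\psi$ (Lemma \ref{unires1}), the absorption $\lambda\circ\phi\sim\mu+\mu_0$ (via Lemma \ref{simcase2}, using that $[\lambda\circ\phi]-[\mu]$ is $m$-large because $[\lambda(1)]\geq L[p]$ with $L=k+rm$), and the stable-uniqueness relation above. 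Note also that the paper's own chain of estimates accumulates $3\varepsilon+12\omega(F)$, so your accounting of a single weak-variation move is optimistic even relative to the source.
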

\begin{proof}
  Since $A$ has cancellation of projections and $[\phi(1)]=[\psi(1)]\in \mathrm{K}_0(B_n)$, after increasing $n$ we find a unitary $u\in B_n$ such that $u\phi(1)u^*=\psi(1)$.
  Therefore we may assume that $\phi(1)=\psi(1)=p$ by replacing the system $(B_r)$ with $(\nu_{n,r}(p)B_r\nu_{n,r})$ and changing notation. We divide the rest of proof into several parts.

  (a) Let the finite set $F\subset A$ and $\varepsilon>0$ be fixed. We may assume that $F$ contains the units of all the direct summands of $A$. Let $\varepsilon_2>0$. Since $\mathrm{KK}(\phi)=\mathrm{KK}(\psi)$, it follows from Theorem \ref{prouni} that there exists an integer $k$, a homomorphism $\eta:A\rightarrow M_k(B_n)$ with finite dimensional image and a unitary $v\in M_{k+1}(B_n)$ such that
 $$
 \|v(\phi(f)\oplus\eta(f))v^*-\psi(f)\oplus\eta(f)\|<\varepsilon_2
 $$
 for all $f\in F$.

 (b) Since the image of $\eta$ is a semiprojective $C^*$-algebra, it follows that
 there exist a finite set $G_1\subset B_n$ with all elements of norm at most one and $\varepsilon_1>0$ such that if $C$ is a unital $C^*$-algebra and $\theta:\,B_n\to C$ is unital and $\varepsilon_1$-multiplicative on $G_1$, then there exists a unitary $w\in M_{k+1}(C)$ and a homomorphism $\mu:A\rightarrow M_k(C)$ with finite dimensional image such that
 $$
 \|(\theta\otimes 1_{k+1})(v)-w\|<\varepsilon_2(1+M)^{-1},
 $$
 where $M=\sup\{\|f\|\,:\,f\in F\},$ and
 $$
 \|(\theta\otimes 1_k)\eta(f)-\mu(f)\|<\varepsilon_2
 $$
 for all $f\in F$. Let $\theta_s$ denote the map $\theta\otimes 1_s$. By enlarging $G_1$, we may arrange that
 $$
 \|(\theta_{k+1})(v(\phi(f)\oplus\eta(f))v^*)-
 (\theta_{k+1})(v)(\theta_{k+1})(\phi(f)\oplus\eta(f))(\theta_{k+1})(v^*)\|<\varepsilon_2
 $$
 for all $f\in F$. Since $\theta_s$ is contractive, we obtain that
 $$
 \|(\theta_{k+1})(v)\theta\phi(f)\oplus\theta_k\eta(f)(\theta_{k+1})(v^*)-
 \theta\psi(f)\oplus\theta_k\eta\|<2\varepsilon_2.
 $$
 Combining this with the estimate above, we have
 $$
 \|w(\theta\phi(f)\oplus\mu(f))w^*-
 \theta\psi(f)\oplus\mu(f)\|<6\varepsilon_2
 $$
 for all $f\in F$. By the continuous functional calculus, we can find a unitary $w_1\in M_{k+1}(C)$ which commutes with $\theta\phi(1)\oplus\mu(1)$ and $\|w-w_1\|<f(\varepsilon_2)$.
 If $\varepsilon_2$ is chosen small enough to ensure that $2Mf(\varepsilon_2)+6\varepsilon_2<\varepsilon$, then we may replace $w$ by $w_1$ in the above estimate, so that
 \begin{equation}\label{}
   \|w_1(\theta\phi(f)\oplus\mu(f))w_1^*-
 \theta\psi(f)\oplus\mu(f)\|<\varepsilon
 \end{equation}
 for all $f\in F$.

 (c) Since $B_n$ is a direct sum of algebras $B_{n,i}$ with connected spectrum, it suffices to consider the case the spectrum of $A$ is connected. Let $A=M_r(I[m_0,m,m_1])$ and let $e$ denote the unit of $A$. We may assume that $\phi(e)\neq 0$.

 Let $G_1$ and $\varepsilon_1<\varepsilon$ be as in (b) and let $L=k+rm$ with $k$ as defined in (a). Let
 $G=\phi(F)\cup \psi(F)\cup G_1$, apply Theorem \ref{decom} for $B_n,$ $G,$ $\varepsilon_1$ and $L$, and use Lemma \ref{rrpro}, to obtain a finite set $G'\subset B_n$ and $r\geq n$ such that $\nu: B_n\rightarrow B_r$ satisfies
$$
\nu(G')\subset_{1/6}\{f\in B_r\mid  f\,\text{has finite spectrum}\}.
$$
Then there exist a projection $p\in B_r$ and a unital homomorphism $\lambda:B_n\rightarrow (1-p)B_r(1-p)$ with finite dimensional image such that
 $$
 \|\nu(g)p-p\nu(g)\|<\varepsilon_1,
 $$
 $$
 \|\nu(g)-p\nu(g)p-\lambda(g)\|<\varepsilon_1
 $$
 for all $g\in G$, and
 $$
 [\lambda(1)]\geq L\cdot[p]\quad{\rm in}\quad K_0(B_r).
 $$

 Define a unital map $\theta:\,B_n\to pB_rp$ by $\theta(g)=p\nu(g)p$. It follows that $\theta$ is $\varepsilon_1$-multiplicative on $G_1$.
  Therefore, we can obtain $w_1$ and $\mu:B_n\rightarrow M_k(pB_rp)$ as in (b).
 It follows that
 \begin{equation}\label{}
 \|\nu\circ\phi(f)-\theta\circ\phi(f)-\lambda\circ\phi(f)\|<\varepsilon_1<\varepsilon,
 \end{equation}
 \begin{equation}\label{}
\|\nu\circ\psi(f)-\theta\circ\psi(f)-\lambda\circ\psi(f)\|<\varepsilon_1<\varepsilon
 \end{equation}
 for all $f\in F$.

  For the special case that $p=0$, then
  $$
  \nu\circ\phi\sim_{F,\,\varepsilon}\lambda\circ\phi,\quad
  \nu\circ\psi\sim_{F,\,\varepsilon}\lambda\circ\psi.
  $$
  Since $\mathrm{KK}(\lambda\circ\phi)=\mathrm{KK}(\lambda\circ\psi)$, by Lemma \ref{unires1}, it follows that
  $$
  \lambda\circ\phi\sim_{F,\,4\omega(F)}\lambda\circ\psi.
  $$
  Then we will have
 $$
 \nu\circ\phi\sim_{F,\,2\varepsilon+4\omega(F)}\nu\circ\psi.
 $$

 (d) If $p\neq 0$, the next step is to show that $[\lambda\circ\phi]-[\mu]$ is $m$-large. Since $[\mu(e)]\leq k\cdot[p]$, we have
 \begin{eqnarray*}
   [\lambda\circ\phi(e)]-[\mu(e)] &=& [\lambda\circ\phi(e)]-[\mu(e)] \\
    &\geq & [\lambda(1)]-k\cdot[p] \\
    &\geq & rm\cdot[p].
 \end{eqnarray*}
 Then $[\lambda\circ\phi]-[\mu]$ is $m$-large.

(e) Since $\mathrm{KK}(\lambda\circ\phi)=\mathrm{KK}(\lambda\circ\psi)$ and $\lambda\circ\phi$, $\lambda\circ\psi$ have finite dimensional image, it follows from  Lemma \ref{simcase1}
 that there exists a unitary $u_1\in (1-p)B_r(1-p)$ such that
 \begin{equation}
 \|u_1\lambda\circ\phi(f)u_1^*-\lambda\circ\psi(f)\|<4\omega(F).
 \end{equation}
 Since $[\lambda\phi]-[\mu]$ is $m$-large, by Lemma \ref{simcase2}, there exists a homomorphism $\mu_0:B_n\rightarrow M_k(B_r)$ whose image is finite dimensional and orthogonal to the image of $\mu$, and there exists a unitary $u_2\in M_k(B_r)$ such that
 \begin{equation}
 \|u_2\lambda\circ\phi(f)u_2^*-\mu(f)-\mu_0(f)\|<4\omega(F)
 \end{equation}
 for all $f\in F$.

 (f) Now we have
 $$
 \nu\circ\phi\sim_{F,\,\varepsilon}\theta\circ\phi+\lambda\circ\phi\quad {\rm by\,(5.2)}
 $$

 $$
 \theta\circ\phi+\lambda\circ\phi\sim_{F,\,4\omega(F)} \theta\circ\phi+\mu+\mu_0\quad {\rm by\,(5.5)}
 $$

 $$
 \theta\circ\phi+\mu+\mu_0\sim_{F,\,\varepsilon} \theta\circ\psi+\mu+\mu_0\quad {\rm by\,(5.1)}
 $$

 $$
 \theta\circ\psi+\mu+\mu_0\sim_{F,\,4\omega(F)} \theta\circ\psi+\lambda\circ\phi\quad {\rm by\,(5.5)}
 $$

 $$
 \theta\circ\psi+\lambda\circ\phi\sim_{F,\,4\omega(F)}\theta\circ\psi+\lambda\circ\psi\quad {\rm by\,(5.4)}
 $$

 $$
 \theta\circ\psi+\lambda\circ\psi\sim_{F,\,\varepsilon}\nu\circ\psi\quad {\rm by\,(5.3)}
 $$
 Summing up, we have
 $$
 \nu\circ\phi\sim_ {F,\,3\varepsilon+12\omega(F)}\nu\circ\psi.
 $$

 Then there exists a unitary $u\in B_r$ such that
 $$
 \|u\nu_{n,r}\circ\phi(f)u^*-\nu_{n,r}\circ\psi(f)\|<3\varepsilon+12\omega(F)
 $$
for all $f\in F$.
\end{proof}

\textbf{Finally we can proceed to the proof of Theorem \ref{cla}:}
\begin{proof}Denote by $\alpha:\,\underline{\mathrm{K}}(A)\to\underline{\mathrm{K}}(B)$ the given isomorphism of ordered graded
$\Lambda$-modules. Let $\beta=\alpha^{-1}$, $\phi_n:\,A_n\to A$ and $\psi_n:\,B_n\to B$ be the obvious maps. Recall that by Theorem \ref{UCT}, we may identify ${\bf Hom}_\Lambda(\underline{\mathrm{K}}(A_i),\underline{\mathrm{K}}(B_j))$
with $\mathrm{KK}(A_i,B_j)$.

We construct a commutative diagram
$$
\xymatrixcolsep{3pc}
\xymatrix{
{\,\,\underline{\mathrm{K}}(A_{r_1})\,\,} \ar[d]_-{\rho_1} \ar[r]^-{\phi_{r_1,r_2*}}
& {\,\,\underline{\mathrm{K}}(A_{r_2})\,\,} \ar[d]_-{\rho_2} \ar[r]^-{}
& {\,\,\cdots\,\,} \ar[r]^-{}
& {\,\,\underline{\mathrm{K}}(A)\,\,}\ar[d]_-{\rho}
 \\
{\,\,\underline{\mathrm{K}}(B_{s_1})\,\,} \ar[r]_-{\psi_{s_1,s_2*}} \ar[ru]^-{\sigma_1}
& {\,\,\underline{\mathrm{K}}(B_{s_2}) \,\,} \ar[r]_-{} \ar[ru]^-{\sigma_2}
& {\,\,\cdots \,\,} \ar[r]_-{}
& {\,\,\underline{\mathrm{K}}(B)\,\,}}
$$
where $\rho_n,\,\sigma_n$ are liftable to $*$-homomorphisms $\xi_n:\,A_{r_n}\to B_{s_n}$
and $\psi_n:\,B_{s_n}\to A_{r_{n+1}}$. The construction is done inductively. We may assume that $A_{r_1}=B_{s_1}={0}$, and hence take $\rho_1=\sigma_1=0$.
Assume now that $\rho_i$ and $\sigma_i$ have been constructed for all $i\leq n-1$.
For $C^*$-algebra $A_{r_n}$, let $F\subset \underline{\mathrm{K}}^+(A_{r_n})$ be provided by Theorem \ref{exist}.
By Proposition \ref{dg finite}, the $\Lambda$-module $\underline{\mathrm{K}}(A_{r_n})$ is finitely generated,
there is $k\geq s_{n-1}$ and there is a $\xi\in{\bf Hom}_\Lambda(\underline{\mathrm{K}}(A_{r_n}),\underline{\mathrm{K}}(B_k))$ such that
$$
\psi_{k*}\xi=\rho\phi_{r_n *},\quad
\xi\sigma_{n-1}=\psi_{s_{n-1},k *},\quad
\xi(F)\subset\underline{\mathrm{K}}^+(B_k),
$$
and
$$
\xi[1_{A_{r_n}}]\leq[1_{B_k}].
$$
Then by Theorem \ref{exist},
$\xi$ can be lifted as a $*$-homomorphism.
We conclude the construction of $\rho_n$ by setting $k=s_n$ and $\rho_n=\xi$.
It is clear that $\rho_n\sigma_{n-1}=\psi_{s_n,s_{n+1}*}$, $\rho\phi_{r_n *}=\psi_{s_n *}\rho_n$.
Let $\xi_n:\,A_{r_n}\to B_{s_n}$ be a $*$-homomorphism implementing $\rho_n$.
The construction of $\sigma_n$ is similar.
This establishes a commutative diagram in the $\mathrm{KK}$-category. Then applying Theorem \ref{unithm0} and a standard intertwining argument, one can proceed in the same way as Theorem 7.3 of \cite{DG} to conclude an isomorphism of the algebras which implements the given isomorphism $\alpha$.
\end{proof}

\proof[Acknowledgements] The research of the second author is supported by a
grant from the Natural Sciences and Engineering Research
Council of Canada. He is indebted to The Fields Institute
for their generous support of the field of operator algebras. Part of this research was carried out during a visit
of the second author to Chongqing University; he thanks Chongqing University for its hospitality.

The third author is supported by the National Science Foundation of China with grant No.\,11501060; he is also supported by the Fundamental Research Funds for the Central Universities (Project No.\,2018CDXYST0024 in Chongqing University). The first and the fourth author are supported by the postdoctoral research station of mathematics in Hebei Normal University. Part of this research was carried out during visits of three of us to The Fields Institute; we thank The Fields Institute for its hospitality.

\end{document}